\newtheorem{remark}{Remark}[section]
\theoremstyle{plain}
\newtheorem{proposition}{Proposition}[section]
\newtheorem{theorem}{Theorem}[section]
\newtheorem{definition}{Definition}[section]
\newtheorem{corollary}{Corollary}[section]
\newtheorem{example}{Example}[section]
\title{\bfseries\scshape{Color Hom-Akivis algebras, Color Hom-Leibniz algebras  and Modules over Color Hom-Leibniz algebras}}
\author{\bfseries\scshape  Ibrahima BAKAYOKO \thanks{E-mail address: \tt{ibrahimabakayoko@yahoo.fr}}\\
D\'epartement de Math\'ematiques,\\ 
UJNK/Centre Universitaire de N'Z\'er\'ekor\'e,\\
BP : 50, N'Z\'er\'ekor\'e, Guinea.\\
\\\bfseries\scshape Momo BANGOURA\thanks{E-mail address: \tt{bangm59@yahoo.fr}} \\
D\'epartement de Math\'ematiques,\\
Universit\'e Gamal Abdel Nasser de Conakry,\\ 
BP : 1147, Conakry, Guinea. \\
\\\bfseries\scshape Bakary MANGA\thanks{E-mail address: \tt{bakary.manga@ucad.edu.sn, bakary.manga@imsp-uac.org}}\\
D\'epartement de Math\'ematiques et Informatique, \\ 
 Universit\'e Cheikh Anta Diop de Dakar,\\
BP 5005 Dakar-Fann, Dakar, Senegal
\\ and\\
Institut de Math\'ematiques et de Sciences Physiques (IMSP)
\\
 01 BP 613, Porto-Novo, Benin.}
\date{} 
\begin{document} 
\maketitle
%\tableofcontents
% 
% \newtheorem{Theorem}{\quad Theorem}[section] 
% 
% \newtheorem{Definition}[Theorem]{\quad Definition} 
% 
% \newtheorem{corollary}[Theorem]{\quad Corollary} 
% 
% \newtheorem{proposition}[Theorem]{\quad Proposition} 
% 
% \newtheorem{Lemma}[Theorem]{\quad Lemma} 
% 
% \newtheorem{example}[Theorem]{\quad Example} 
% 
% \newtheorem{Remark}[Theorem]{\quad Remark}
% 
% \newtheorem{Proof}[Theorem]{\quad Proof}
% 
% \centerline{}

% {\footnotesize Copyright $\copyright$ 20xx Author 1 and Author 2. This is an open access article distributed under the Creative
%  Commons Attribution License, which permits unrestricted use, 
% distribution, and reproduction in any medium, provided the original work is properly cited.}

\begin{abstract} 
In this paper we introduce color Hom-Akivis algebras and prove that the commutator of any color non-associative Hom-algebra structure 
map leads to a color Hom-akivis algebra. We give various constructions of color Hom-Akivis algebras. Next we study flexible and alternative 
color  Hom-Akivis algebras. Likewise color Hom-Akivis algebras, we introduce non-commutative color Hom-Leibniz-Poisson algebras 
and presente several  constructions. Moreover we give the relationship between Hom-dialgebras and Hom-Leibniz-Poisson algebras; 
i.e. a Hom-dialgebras give rise to a Hom-Leibniz-Poisson algebra. Finally we show that twisting a color Hom-Leibniz module 
structure map by a color Hom-Leibniz algebra endomorphism, we get another one.
\end{abstract} 
{\bf Subject Classification:}  17A30, 17D10, 16W30.

\noindent
{\bf Keywords:} Color Hom-Akivis algebras, Non-commutative Hom-Leibniz-Poisson algebra, Non-commutative Hom-Leibniz-Poisson modules,
 Hom-dialgebras.

\section{Introduction}

Hom-algebraic structures appeared first as a generalization of Lie algebras in \cite{NH}  were the authors
studied $q$-deformations of Witt and Virasoro algebras.
Other interesting Hom-type algebraic structures of many classical structures were studied as Hom-associative algebras, Hom-Lie admissible
 algebras  and more general G-Hom-associative algebras (\cite{AS2}), $n$-ary Hom-Nambu-Lie algebras (\cite{FSA}), 
 Hom-Lie admissible Hom-coalgebras and Hom-Hopf algebras (\cite{AM}), Hom-alternative algebras, Hom-Malcev algebras and Hom-Jordan 
 algebras (\cite{DY3}). Hom-algebraic structures were extended to the case of $G$-graded Lie algebras by studying Hom-Lie superalgebras, 
 Hom-Lie admissible superalgebras  in  \cite{MM}, color Hom-Lie algebras (\cite{KA}) and color Hom-Poisson algebras (\cite{IB1}). 
In \cite{LY}, Yuan  presentes some constructions of quadratique color Hom-Lie algebras, and this is used to provide several examples.
 $T^*$-extensions and central extensions of color Hom-Lie algebras and some cohomological characterizations are established (\cite{FIS}).

 Akivis algebras were introduced by M. A. Akivis (\cite{AMA}) as a tool in the study of some aspects of Web geometry and its connection
with loop theory. These algebras were originally called "W-algebras" (\cite{AMA1}). Later, Hofmann  and Strambach (\cite{HS}) introduce
the term "Akivis algebras" for such algebraic objects.

Hom-Akivis algebras are introduced in \cite{NI1}, in which it is showed that the commutator of non-Hom-associative algebras 
lead to Hom-Akivis algebras. It is also proved that Hom-Akivis algebras can be obtain from Akivis algebras 
by twisting along algebra endomorphisms, and that the class of Hom-Akivis algebras is closed under self-morphisms. 
The connection between Hom-Akivis algebras and Hom-alternative algebras is given.
%i.e. Hom-Akivis algebras associated to Hom-alternative algebras are Hom-Malcev algebras.

A non-commutative version of Lie algebras  were introduced by Loday in \cite{JL}. The bracket of the so-called Leibniz algebras
satisfies the Leibniz identity; which, combined with skew-symmetry, is a variation of the Jacobi identity. 
hence Lie algebras are skew-symmetric Leibniz algebras.
In the Leibniz setting, the objects that play the role of associative algebras are called dialgebras, which were
introduced by Loday in \cite{JL1}. Dialgebras have two associative binary operations that
satisfy three additional associative-type axioms.
The relationship between dialgebras and Leibniz algebras are analyzed in \cite{JMC}.
It  is proved in \cite{SMR} that from any Leibniz algebra $L$ one can construct a Leibniz-Poisson algebra $A$ and the properties of $L$
are close to the properties of $A$.  Leibniz algebras were extended to Hom-Leibniz algebras in \cite{AS1}. 
A (co)homology theory of Hom-Leibniz algebras and an initial
study of universal central extensions of Hom-Leibniz algebras was given in \cite{YY}.
The representation of Hom-Leibniz algebras are introduced in \cite{JIR} and the homology of Hom-Leibniz algebras are computed. 
Recently Leibniz superalgebras were studied in \cite{KSM}.
The Hom-dialgebras are introduced in \cite{DY1} as an  extention of 
some work of Loday (\cite{JL1}). In a Hom-dialgebra, there are two binary operations that satisfy five
 $\alpha$-twisted associative-type axioms. Each Hom-Lie algebra can be thought of as a Hom-Leibniz algebra. Likewise, to
every Hom-associative algebra is associated a Hom-dialgebra in which both binary
operations are equal to the original one.

%%%%%%%%%%%%%%%%%%%%%%%%%%%%%%%%%%%%%%%%%%%%%%%%%%%%%%%%%%%%%%%%%%%%%%%%%%%%%%%%%%%%%%%%%%%%%%%%%%%%%%%%%%%%%%%%%%%%%%%%%%%%%%%%%%%%%%%%%%%%%%%
The purpose of this paper is to study color Hom-Akivis algebras (graded version of Hom-Akivis algebras (\cite{NI1})), color 
non-commutative-Hom-Leibniz-Poisson algebras and modules over color Hom-Leibniz algebras.

Section 2 is dedicated to the background material on graded vector spaces, bicharacter and non-Hom-associative algebras.
In section 3, we define color Hom-Akivis algebras which is the graded version of Hom-Akivis algebras. % \cite{NI1}. 
Various properties of color Hom-Akivis algebras are studied. More precisely, we show that the commutator 
of color non-Hom-associative algebra structure map lead  to color Hom-Akivis algebras (Theorem \ref{n3}). 
Next, we provide some constructions of color Hom-Akivis algebras; 
on the one hand from color Akivis algebras (Proposition \ref{n5}) and on the other hand from a given color Hom-Akivis algebras 
(Theorem \ref{n4}). Flexible color Hom-Akivis algebras and Hom-alternative color Hom-Akivis algebras are also exposed (Theorem \ref{n6}).
Section 4 being devoted to the study of Color Hom-Leibniz algebras, we define in it color Hom-Leibniz algebras 
and give basic properties (Proposition \ref{n1}). Next, we introduce non-commutative
 Hom-Leibniz-Poisson algebras (in brief color NHLP-algebras). As in the previous section on color Hom-Akivis algebras, 
 we give several twisting of non-commutative Hom-Leibniz-Poisson algebras (Theorem \ref{n1}).
Finally we give the relationship between Hom-dialgebras and Hom-Leibniz-Poisson algebras (Theorem \ref{diax}).
In section 5, we prove that the Yau's twisting of module Hom-algebras (\cite[ Lemma 2.5]{DY2}) works for modules over  color
 Hom-Leibniz algebras  (Theorem \ref{mcl}) likewise it runs for modules over Hom-Lie algebras (\cite{BI}),
 modules over color Hom-Lie  algebras (\cite{IB1}) and modules over color Hom-Poisson algebras (\cite{IB1}).

% In this section, we write NHLP-algebra (resp. NLP-algebra) for non-commutative Hom-Leibniz-Poisson algebra 
% (resp. non-commutative Leibniz-Poisson algebra).
Throughout this paper, $\bf K$ denotes a field of characteristic $0$ and $G$ is an abelian group. % unless otherwise specified. 

\section{Preliminaries} 
\begin{definition}
 A vector space $V$ is said to be a $G$-graded if, there exist a family $(V_a)_{a\in G}$ 
of vector subspaces of $V$ such that $\displaystyle V=\bigoplus_{a\in G} V_a$. 
An element $x\in V$ is said to be homogeneous of degree $a\in G$ if $x\in V_a$. We denote $\mathcal{H}(V)$ the set of all homogeneous 
elements in $V$. 
\end{definition}
\begin{definition}
 Let $\displaystyle V=\bigoplus_{a\in G} V_a$ and $\displaystyle V'=\bigoplus_{a\in G} V'_a$ be two $G$-graded 
vector spaces, and $b\in G$. A linear map $f : V\to V'$ is said to be homogeneous of degree $b$ if 
$f(V_a)\subseteq  V'_{a+b}$, for all $a\in G$. If, $f$ is homogeneous of degree zero i.e. $f(V_a)\subseteq V'_{a}$ 
holds for any $a\in G$, then $f$ is said to be even.
\end{definition}
\begin{definition}
 An algebra $(A, \mu)$ is said to be $G$-graded if its underlying vector space is $G$-graded  and if, furthermore, 
$\mu(A_a, A_b)\subseteq A_{a+b}$, for all $a, b\in G$. Let $A'$ be another $G$-graded algebra. A morphism $f : A\rightarrow A'$ 
of $G$-graded algebras is by definition an algebra morphism from $A$ to $A'$ which is, in addition an even mapping.
\end{definition}
\begin{definition}
 Let $G$ be an abelian group. A map $\varepsilon :G\times G\rightarrow {\bf K^*}$ is called a skew-symmetric bicharacter on $G$ 
 if the following identities hold: for all $a, b, c\in G$, 
\begin{enumerate} 
 \item [(i)] $\varepsilon(a, b)\varepsilon(b, a)=1$;
\item [(ii)] $\varepsilon(a, b+c)=\varepsilon(a, b)\varepsilon(a, c)$;
\item [(iii)]$\varepsilon(a+b, c)=\varepsilon(a, c)\varepsilon(b, c)$.
\end{enumerate}

\end{definition}
% {\color{red} Remark that 
% $$\varepsilon(a, 0)=\varepsilon(0, a)=1, \varepsilon(a,a)=\pm 1 \quad\mbox{for all}\quad a\in G,$$
% where $0$ is the identity of $G$.
% 
% If $x$ and $y$ are two homogeneous elements of degree $a$ and $b$ respectively and $\varepsilon$ is a skew-symmetric bicharacter, 
% then we shorten the notation by writing $\varepsilon(x, y)$ instead of $\varepsilon(a, b)$.}

% \begin{definition}
% A color Hom-associative algebra is a quadruple $(A, \mu, \varepsilon, \alpha)$ consisting of a $G$-graded vector space $A$, an even bilinear map 
% $\mu : A\times A\rightarrow A$ and an even linear map $\alpha : A\rightarrow A$ such that
% \begin{eqnarray}
% \mu(\alpha(x), \alpha(y))&=&\alpha(\mu(x, y))\\
%  \mu(\alpha(x), \mu(y, z))&=&\mu(\mu(x, y), \alpha(z)).
% \end{eqnarray}
% If in addition $\mu(x, y)=\varepsilon(x, y)\mu(y, x)$, the color Hom-associative algebra $(A, \mu, \varepsilon, \alpha)$ is said 
% to be a $\varepsilon$-commutative color Hom-associative algebra.
% \end{definition}
% 
% {\it Remark :}
% When $\alpha=Id$, we recover the classical associative color algebra.
% %  (\cite{}).
% % \end{rema}
%  Recall that the Hom-associator, $as_A$, of a Hom-algebra $A$ is defined as
% \begin{eqnarray}
%  as_A : A\otimes A\otimes A \rightarrow A, \quad
% (x, y, z)\mapsto \mu(\mu(x, y), \alpha(z))-\mu(\alpha(x), \mu(y, z))\nonumber.
% \end{eqnarray}
% Observe that  $as_A\equiv 0$ when $A$ is a color-Hom-associative algebra. 
\begin{definition}
\begin{enumerate}
 \item  A multiplicative color non-Hom-associative (i.e. non necessarily Hom-associative) algebra is a quadruple 
 $(A, \mu, \varepsilon, \alpha_A)$ such that
\begin{enumerate}
 \item $A$ is $G$-graded vector space;
\item $\mu : A\otimes A\rightarrow A$ is an even bilinear map;
\item $\varepsilon : G\otimes G\rightarrow K^*$ is a bicharacter;
\item $\alpha_A$ is an endomorphism of $(A,\mu)$ (multiplicativity).
\end{enumerate}
\item $(A, \mu, \alpha)$ is said to be $\varepsilon$-skew-symmetric (resp. $\varepsilon$-commutative) if, for any $x, y\in A$, 
$\mu(x, y)=-\varepsilon(x, y)\mu(y, x)$ (resp. $\mu(x, y)=\varepsilon(x, y)\mu(y, x)$).
\item
Let $(A, \mu, \alpha_A)$ be color non-Hom-associative algebra. For any $x, y, z\in A$, the Hom-associator is defined by 
\begin{eqnarray}
 as(x, y, z)=\mu(\mu(x, y), \alpha_A(z))-\mu(\alpha_A, \mu(y, z)).
\end{eqnarray}
Then $(A, \mu, \varepsilon, \alpha_A)$ is said to be
\begin{enumerate}
\item Hom-flexible, if $as(x, y, x)=0$;
\item Hom-alternative, if $as(x, y, z)$ is $\varepsilon$-skew-symmetric in $x, y, z$. % i.e.
\end{enumerate}
\end{enumerate}
\end{definition}

\section{Color Hom-akivis algebras}
Hom-Akivis algebras was introduced in \cite{NI1} as a twisted generalization of Akivis algebras. In the following, we generalize some 
results. We prove that the commutator of a color non-Hom-associative algebra leads to a color Hom-Akivis algebra. Some  properties
of color Hom-Akivis algebras are studied. 
\begin{definition}\label{hai}
 A color Hom-Akivis algebra is quintuple $(A, [-, -], [-, -, -], \varepsilon, \alpha_A)$ consisting of a $G$-graded vector space $A$,
 an even $\varepsilon$-skew-symmetric bilinear map $[-, -]$, an even trilinear map $[-, -, -]$ and an even endomorphism 
 $\alpha_A : A\rightarrow A$ such that for all $x, y, z\in \mathcal{H}(A)$,
\begin{equation}
 \oint\varepsilon(z, x)[[x, y], \alpha_A(z)]=\oint\varepsilon(z, x)\Big([x, y, z]-\varepsilon(x, y)[y, x, z]\Big). \label{hai1}
\end{equation}
If in addition, $\alpha_A$ is an endomorphism with respect to $[-,-]$ and $[-,-,-]$, the color Hom-Akivis algebra $A$ 
is said to be multiplicative.
\end{definition}
\begin{theorem}\label{n3}
 Let $(A, \mu, \varepsilon, \alpha_A)$ be a color non-Hom-associative algebra. Define the maps $[-, -]: A\otimes A\rightarrow A$ and
$[-, -, -]_{\alpha_A} : A\otimes A\otimes A\rightarrow A$ as follows: for all $x,y,z\in A$, 
\begin{eqnarray}
[x, y] &:=& \mu(x, y)-\varepsilon(x, y)\mu(y, x), \cr 
[x, y, z]_{\alpha_A}&:=&as(x, y, z).\nonumber
\end{eqnarray}
Then $(A, [-, -], [-, -, -]_{\alpha_A}, \varepsilon, \alpha_A)$ is a multiplicative color Hom-Akivis algebra.
\end{theorem}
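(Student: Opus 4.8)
The plan is to verify the three requirements of Definition \ref{hai} in turn, postponing the graded Akivis identity \eqref{hai1} to the end. First I would record that the bracket $[x,y]=\mu(x,y)-\varepsilon(x,y)\mu(y,x)$ is even (being built from the even maps $\mu$ and $\alpha_A$) and $\varepsilon$-skew-symmetric; the latter is a one-line check using $\varepsilon(x,y)\varepsilon(y,x)=1$, since
\[
-\varepsilon(x,y)[y,x]=-\varepsilon(x,y)\mu(y,x)+\varepsilon(x,y)\varepsilon(y,x)\mu(x,y)=[x,y].
\]
Evenness of $[-,-,-]_{\alpha_A}=as$ is inherited from that of $\mu$ and $\alpha_A$, and multiplicativity — that $\alpha_A$ is an endomorphism for both $[-,-]$ and $as$ — is immediate from the hypothesis that $\alpha_A$ is an endomorphism of $(A,\mu)$, because both new operations are assembled solely from $\mu$ and $\alpha_A$.

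The substance of the proof is identity \eqref{hai1}. I would expand each side over the cyclic sum $\oint$ and compare the resulting $\mu\mu$-monomials. Writing $[[x,y],\alpha_A(z)]$ out in full produces the four terms
\[
\mu(\mu(x,y),\alpha_A(z))-\varepsilon(x,y)\mu(\mu(y,x),\alpha_A(z))-\varepsilon(x+y,z)\mu(\alpha_A(z),\mu(x,y))+\varepsilon(x+y,z)\varepsilon(x,y)\mu(\alpha_A(z),\mu(y,x)),
\]
while the right-hand summand $[x,y,z]-\varepsilon(x,y)[y,x,z]=as(x,y,z)-\varepsilon(x,y)\,as(y,x,z)$ expands (using $as(x,y,z)=\mu(\mu(x,y),\alpha_A(z))-\mu(\alpha_A(x),\mu(y,z))$) to
\[
\mu(\mu(x,y),\alpha_A(z))-\mu(\alpha_A(x),\mu(y,z))-\varepsilon(x,y)\mu(\mu(y,x),\alpha_A(z))+\varepsilon(x,y)\mu(\alpha_A(y),\mu(x,z)).
\]
The two terms in which $\alpha_A(z)$ sits on the right of $\mu$ coincide term for term on both sides, so after prepending the cyclic factor $\varepsilon(z,x)$ and summing they cancel without any reindexing.

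What remains is to match the two ``left'' terms (those of the form $\mu(\alpha_A(\cdot),\mu(\cdot,\cdot))$) of the two cyclic sums, and here the only genuine work is the bicharacter bookkeeping. Using $\varepsilon(x+y,z)=\varepsilon(x,z)\varepsilon(y,z)$ together with $\varepsilon(z,x)\varepsilon(x,z)=1$ one collapses the prefactor $\varepsilon(z,x)\varepsilon(x+y,z)$ to $\varepsilon(y,z)$; each of the three cyclic instances of $-\varepsilon(y,z)\mu(\alpha_A(z),\mu(x,y))$ then coincides, as a scalar-times-monomial, with a cyclic instance of $-\varepsilon(z,x)\mu(\alpha_A(x),\mu(y,z))$ coming from the right-hand side, and the same relabeling pairs the monomials $\mu(\alpha_A(\cdot),\mu(\cdot,\cdot))$ carrying the extra factor $\varepsilon(x,y)$. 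Hence both cyclic sums agree and \eqref{hai1} holds. I expect this final matching of the six ``left'' terms to be the main obstacle: the argument is elementary, but one must track the $\varepsilon$-factors carefully so that the cyclically permuted monomials are paired correctly, which is exactly where a sign or grading slip would occur.
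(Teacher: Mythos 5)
Your proof is correct and takes essentially the same route as the paper's: expand $[[x,y],\alpha_A(z)]$ and $[x,y,z]-\varepsilon(x,y)[y,x,z]$ into $\mu\mu$-monomials, prepend the cyclic factor $\varepsilon(z,x)$, and compare the two cyclic sums, with $\varepsilon$-skew-symmetry and multiplicativity inherited directly from $\mu$ and $\alpha_A$. The only difference is that you make explicit the term-matching the paper leaves implicit (the identical ``right'' terms, the collapse $\varepsilon(z,x)\varepsilon(x+y,z)=\varepsilon(y,z)$, and the cyclic pairing of the ``left'' terms), and this bookkeeping is indeed carried out correctly.
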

\begin{proof}
Color Hom-Akivis identity (\ref{hai1}) is proved by direct computation. For any homogeneous elements $x, y, z\in A$, we have
\begin{eqnarray}
 \varepsilon(z, x)[[x, y], \alpha_A(z)]&=&\varepsilon(z, x)(xy)\alpha_A(z)-\varepsilon(y, z)\alpha_A(z)(xy)\nonumber\\
&&-\varepsilon(z, x)\varepsilon(x, y)(yx)\alpha_A(z)+\varepsilon(x, y)\varepsilon(y, z)\alpha_A(z)(yx).\nonumber
\end{eqnarray}
And, 
\begin{eqnarray}
 \varepsilon(z, x)([x, y, z]-\varepsilon(x, y)[y, x, z])&=&\varepsilon(z, x)(xy)\alpha_A(z)-\varepsilon(z, x)\alpha_A(x)(yz)\nonumber\\
&&-\varepsilon(z, x)
\varepsilon(x, y)(yx)\alpha_A(z)+\varepsilon(z, x)\varepsilon(x, y)\alpha_A(y)(xz).\nonumber
\end{eqnarray}
Expanding 
$\oint\varepsilon(z, x)[[x, y], \alpha_A(z)]$ and $\oint\varepsilon(z, x)([x, y, z]-\varepsilon(x, y)[y, x, z])$, one gets (\ref{hai1})
and so the conclusion holds.
The multiplicativity follows from that of $(A, \mu, \varepsilon, \alpha_A)$.
\end{proof}
\begin{definition}
 Let $(A, [-, -], [-, -, -], \varepsilon, \alpha_A)$ and $(\tilde A, \{-, -\}, \{-, -, -\}, \varepsilon, \alpha_{\tilde A})$ be two color
 Hom-Akivis algebras. A morphism $f : A\rightarrow\tilde A$ of color Hom-Akivis algebras is an even linear map of $G$-graded vector spaces
 $A$ and $\tilde A$ such that 
\begin{eqnarray}
 f([x, y])&=&\{f(x), f(y)\},\nonumber\\
f([x, y, z])&=&\{f(x), f(y), f(z)\}.\nonumber
\end{eqnarray}
\end{definition}
For example, if we take $(A, [-, -], [-, -, -], \varepsilon, \alpha_A)$ as a multiplicative color Hom-Akivis algebra, then the twisting self-map 
$\alpha_A$ is itself an endomorphism of $(A, [-, -], [-, -, -], \varepsilon)$.

The following theorem is the color version of Theorem 4.4 in \cite{NI1}. 
\begin{theorem}\label{n4}
 Let $A_{\alpha_A}=(A, [-, -], [-, -, -], \varepsilon, \alpha_A)$ be a color Hom-Akivis algebra and 
 $\beta : A\to A$ an even endomorphism.
Then, for any integer $n\geq 1$, $A_{\beta^n}=(A, [-, -]_{\beta^n}, [-, -, -]_{\beta^{n}}, \varepsilon,\beta^n\circ\alpha_A)$ 
is a color Hom-Akivis algebra, where for all $x, y, z\in \mathcal{H}(A)$,
\begin{eqnarray}
 [x, y]_{\beta^n}&:=&{\beta^n}([x, y]),\cr
[x, y, z]_{\beta^n}&:=&{\beta^{2n}}([x, y, z]). \nonumber
\end{eqnarray}
Moreover, if $A_{\alpha_A}$ is multiplicative and $\beta$ commutes with $\alpha_A$, then $A_{\beta^n}$ is multiplicative.
\end{theorem}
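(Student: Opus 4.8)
The plan is to recognize this as the standard Yau-type twisting procedure, now adapted to the color-graded Akivis setting, where the essential point is the compatible choice of weights: $\beta^n$ on the binary bracket, $\beta^{2n}$ on the ternary one, and $\beta^n\circ\alpha_A$ on the twisting map. First I would dispose of the structural requirements that do not involve identity (\ref{hai1}): that $\beta^n\circ\alpha_A$ is an even linear map (immediate, being a composite of even maps), that $[-,-]_{\beta^n}$ is even and $\varepsilon$-skew-symmetric, and that $[-,-,-]_{\beta^n}$ is even and trilinear. The skew-symmetry is a one-line check, using that $\beta^n$ is linear and $\varepsilon(x,y)$ is a scalar: $[x,y]_{\beta^n}=\beta^n([x,y])=\beta^n(-\varepsilon(x,y)[y,x])=-\varepsilon(x,y)[y,x]_{\beta^n}$; evenness of every map follows from $\beta$ being even together with evenness of the original operations.

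The heart of the proof is verifying the color Hom-Akivis identity (\ref{hai1}) for $A_{\beta^n}$, and I would do this by showing that each generic summand of both cyclic sums equals $\beta^{2n}$ applied to the corresponding summand of the original identity. For the left-hand side, using that $\beta$ (hence $\beta^n$) is an endomorphism for $[-,-]$, so that $[\beta^n(u),\beta^n(v)]=\beta^n([u,v])$, one obtains
\[
[[x,y]_{\beta^n},(\beta^n\circ\alpha_A)(z)]_{\beta^n}=\beta^n\big([\beta^n([x,y]),\beta^n(\alpha_A(z))]\big)=\beta^{2n}\big([[x,y],\alpha_A(z)]\big),
\]
which notably does \emph{not} require $\beta$ to commute with $\alpha_A$. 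For the right-hand side, by definition $[x,y,z]_{\beta^n}-\varepsilon(x,y)[y,x,z]_{\beta^n}=\beta^{2n}\big([x,y,z]-\varepsilon(x,y)[y,x,z]\big)$. Since the bicharacter coefficients $\varepsilon(z,x)$ weighting each summand are scalars untouched by the linear map $\beta^{2n}$, both cyclic sums factor as $\beta^{2n}$ applied to the two sides of (\ref{hai1}) for $A_{\alpha_A}$; as these agree by hypothesis, (\ref{hai1}) holds for $A_{\beta^n}$.

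For the multiplicativity assertion I would assume $A_{\alpha_A}$ multiplicative (so $\alpha_A$ preserves both operations) together with $\beta\circ\alpha_A=\alpha_A\circ\beta$, and check that $\beta^n\circ\alpha_A$ preserves $[-,-]_{\beta^n}$ and $[-,-,-]_{\beta^n}$. Moving $\alpha_A$ past powers of $\beta$ via the commutation hypothesis and using that $\alpha_A$ and $\beta$ are both endomorphisms, one computes
\[
(\beta^n\circ\alpha_A)([x,y]_{\beta^n})=\beta^{2n}\big([\alpha_A(x),\alpha_A(y)]\big)=[(\beta^n\circ\alpha_A)(x),(\beta^n\circ\alpha_A)(y)]_{\beta^n},
\]
and the ternary case follows identically, carrying one additional factor of $\beta^n$ on each side (weight $\beta^{3n}$ in place of $\beta^{2n}$).

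The computations are routine; the one thing to watch is the bookkeeping of the powers of $\beta$. The reason the argument closes is precisely the matching of weights: expanding a double bracket $[[x,y]_{\beta^n},\,\cdot\,]_{\beta^n}$ accumulates two factors of $\beta^n$, so the left side of (\ref{hai1}) lands in weight $\beta^{2n}$ and meets the ternary side, whose operation is weighted by $\beta^{2n}$ by design, verbatim. I expect no genuine obstacle beyond confirming that these weights coincide on both sides of every identity involved.
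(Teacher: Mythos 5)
Your proof is correct and follows essentially the same route as the paper: use the endomorphism property of $\beta$ to rewrite each twisted double bracket as $\beta^{2n}$ applied to the original one, pull $\beta^{2n}$ out of the cyclic sum, apply the original identity (\ref{hai1}), and push $\beta^{2n}$ back into the ternary terms via the definition of $[-,-,-]_{\beta^n}$. The only difference is that the paper defers the multiplicativity check to \cite[Theorem 4.4]{NI1}, whereas you carry out that computation (with the correct weights $\beta^{2n}$ and $\beta^{3n}$) explicitly, and you correctly note that the commutation $\beta\circ\alpha_A=\alpha_A\circ\beta$ is needed only for this last part.
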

\begin{proof}
It is clear that $[-, -]_{\beta^n}$ and $[-, -, -]_{\beta^n}$ are bilinear and trilinear respectively. It is also clear that
the $\varepsilon$-skew-symmetry of $[-, -]_{\beta^n}$ comes from that of $[-, -]$. It remains to prove the color Hom-Akivis identity
 (\ref{hai1}) and the multiplicativity of $A_{\beta^n}$. For any $x, y, z\in \mathcal{H}(A)$, we have,
  \begin{eqnarray}
   \oint\varepsilon(z, x)[[x, y]_{\beta^n}, (\beta^n\circ\alpha_A)(z)]_{\beta^n}
&=&\oint\varepsilon(z, x)\beta^n[\beta^n[x, y], (\beta^n\circ\alpha_A)(z)]\nonumber\\
&=&\beta^{2n}\oint \varepsilon(z, x)([x, y, z]-\varepsilon(x, y)[y, x, z])\nonumber\\
&=&\oint \varepsilon(z, x)(\beta^{2n}[x, y, z]-\varepsilon(x, y)\beta^{2n}[y, x, z])\nonumber\\
&=&\oint \varepsilon(z, x)([x, y, z]_{\beta^{2n}}-\varepsilon(x, y)[y, x, z]_{\beta^{2n}}).\nonumber
  \end{eqnarray}
The multiplicativity is proved similarly as in \cite[Theorem 4.4]{NI1}.
\end{proof}
\begin{corollary}\label{n2}
 Let $(A, [-, -], [-, -, -], \varepsilon)$ be a color Akivis algebra and $\beta : A\rightarrow A$ an even endomorphism. Then
$(A, [-, -]_{\beta^n}, [-, -, -]_{\beta^{n}}, \varepsilon, \alpha_A)$ is a multiplicative color Hom-Akivis algebra.

Moreover, suppose that $(\tilde A, \{-, -\}, \{-, -, -\}, \varepsilon)$ is another color Akivis algebra and $\tilde\alpha_A$ 
an even endomorphism of $\tilde A$. If $f : A\rightarrow \tilde A$ is a morphism of color Akivis algebras such that 
$f\circ \alpha_A={\alpha_{\tilde A}}\circ f$, then 
$f : (A, [-, -], [-, -, -], \varepsilon, \alpha_A)\rightarrow (\tilde A, \{-, -\}, \{-, -, -\}, \varepsilon, \alpha_{\tilde A})$
is a morphism of multiplicative color Hom-Akivis algebras.
\end{corollary}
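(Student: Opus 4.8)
The plan is to deduce both assertions from Theorem~\ref{n4} by exploiting the elementary observation that a color Akivis algebra is precisely a color Hom-Akivis algebra whose twisting map is the identity. Indeed, substituting $\alpha_A=\mathrm{id}_A$ into the defining identity~(\ref{hai1}) collapses it to the (ungraded-twist) color Akivis identity, so $(A,[-,-],[-,-,-],\varepsilon,\mathrm{id}_A)$ is a bona fide color Hom-Akivis algebra, and it is trivially multiplicative since $\mathrm{id}_A$ is an endomorphism with respect to both brackets.

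First I would apply Theorem~\ref{n4} to this algebra with the given even endomorphism $\beta$. For every $n\geq 1$ it yields the color Hom-Akivis algebra $A_{\beta^{n}}=(A,[-,-]_{\beta^{n}},[-,-,-]_{\beta^{n}},\varepsilon,\beta^{n}\circ\mathrm{id}_A)$, whose twisting map $\beta^{n}\circ\mathrm{id}_A=\beta^{n}$ is exactly the map denoted $\alpha_A$ in the statement. Multiplicativity is then immediate from the last clause of Theorem~\ref{n4}: the original twisting map $\mathrm{id}_A$ is multiplicative, and $\beta$ commutes with $\mathrm{id}_A$, so the hypotheses are met. This settles the first assertion.

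For the morphism assertion I would apply the first part to $A$ (with $\beta=\alpha_A$) and to $\tilde A$ (with $\beta=\alpha_{\tilde A}$), producing multiplicative color Hom-Akivis structures with twisted brackets $[-,-]_{\alpha_A^{n}},[-,-,-]_{\alpha_A^{n}}$ and $\{-,-\}_{\alpha_{\tilde A}^{n}},\{-,-,-\}_{\alpha_{\tilde A}^{n}}$. The only work is to check that $f$ respects these. I would first promote the hypothesis $f\circ\alpha_A=\alpha_{\tilde A}\circ f$ to arbitrary powers by a one-line induction, obtaining $f\circ\alpha_A^{\,k}=\alpha_{\tilde A}^{\,k}\circ f$ for all $k\geq 0$; this simultaneously covers the single power occurring in the binary bracket and the double power occurring in the ternary bracket. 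Using the $\mathbf{K}$-linearity of $f$ to commute it past these powers, together with the assumption that $f$ is a morphism of the underlying color Akivis algebras, one gets
\begin{eqnarray}
f([x,y]_{\alpha_A^{n}})&=&f\big(\alpha_A^{n}[x,y]\big)=\alpha_{\tilde A}^{n}\,f([x,y])=\alpha_{\tilde A}^{n}\{f(x),f(y)\}=\{f(x),f(y)\}_{\alpha_{\tilde A}^{n}},\nonumber\\
f([x,y,z]_{\alpha_A^{n}})&=&f\big(\alpha_A^{2n}[x,y,z]\big)=\alpha_{\tilde A}^{2n}\,f([x,y,z])=\alpha_{\tilde A}^{2n}\{f(x),f(y),f(z)\}=\{f(x),f(y),f(z)\}_{\alpha_{\tilde A}^{n}},\nonumber
\end{eqnarray}
while compatibility of $f$ with the new twisting maps $\alpha_A^{n}$ and $\alpha_{\tilde A}^{n}$ is exactly the $k=n$ instance of the induction (the stated case being $n=1$).

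The computation is entirely routine, so there is no genuine obstacle; the only point demanding care is bookkeeping of the two distinct powers of the twisting maps — one power $\beta^{n}$ for the binary product and two powers $\beta^{2n}$ for the ternary product, as dictated by the definitions in Theorem~\ref{n4}. Consequently the intertwining relation must be iterated to even order $2n$, not merely to order $n$, which is precisely why passing the single hypothesis $f\circ\alpha_A=\alpha_{\tilde A}\circ f$ through the inductive step at the outset is the economical way to proceed.
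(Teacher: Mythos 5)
Your proposal is correct and takes essentially the same route as the paper: the paper's ``proof'' is simply a citation of \cite[Corollary 4.5]{NI1}, and that corollary is established exactly as you do it, by regarding the color Akivis algebra as a color Hom-Akivis algebra with identity twisting map, invoking the twisting theorem (here Theorem \ref{n4}), and checking the morphism condition by a direct computation that pushes $f$ past $\beta^{n}$ for the binary bracket and $\beta^{2n}$ for the ternary bracket. Your bookkeeping of the two powers and the induction on $f\circ\alpha_A^{k}=\alpha_{\tilde A}^{k}\circ f$ is precisely the content the paper leaves to the reader.
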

\begin{proof}
It is similar to that of  \cite[Corollary 4.5]{NI1}.
\end{proof}
 The above construction of color Hom-Akivis algebras is used in \cite{NI1}, in the case of Hom-Akivis algebras,
 to produce examples of Hom-Akivis algebras. 
\begin{proposition}\label{n5}
 Let $(A, [-, -], [-, -, -], \varepsilon)$ be a color Akivis algebra and $\beta : A\rightarrow A$ an even endomorphism. Define
$[-, -]_{\beta^n}$ and $[-, -, -]_{\beta^{n}}$ by: for all $x, y, z\in \mathcal{H}(A)$, 
\begin{eqnarray}
 [x, y]_{\beta^n}&:=&{\beta}([x, y]_{\beta^{n-1}}),\cr 
[x, y, z]_{\beta^n}&:=&{\beta^{2}}([x, y, z]_{\beta^{n-1}}),\nonumber
\end{eqnarray}
Then $(A, [-, -]_{\beta^n}, [-, -, -]_{\beta^{n}}, \varepsilon, \alpha_A)$ is a multiplicative color Hom-Akivis algebra.
\end{proposition}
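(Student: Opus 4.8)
The plan is to observe that this Proposition merely recasts Corollary \ref{n2}: the recursively defined brackets here coincide with the closed-form brackets $\beta^n([-,-])$ and $\beta^{2n}([-,-,-])$ appearing in Theorem \ref{n4} and Corollary \ref{n2}. Once this identification is in place, the result follows at once from the already-established material.

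First I would dispose of the base case and the inductive step. With the convention $\beta^0=\mathrm{id}$, the brackets $[-,-]_{\beta^0}$ and $[-,-,-]_{\beta^0}$ are the original color Akivis brackets. Assuming inductively that $[x,y]_{\beta^{n-1}}=\beta^{n-1}([x,y])$ and $[x,y,z]_{\beta^{n-1}}=\beta^{2(n-1)}([x,y,z])$, the defining recursions give
$$[x,y]_{\beta^n}=\beta\big(\beta^{n-1}([x,y])\big)=\beta^n([x,y]), \qquad [x,y,z]_{\beta^n}=\beta^2\big(\beta^{2(n-1)}([x,y,z])\big)=\beta^{2n}([x,y,z]),$$
so the two families of brackets agree for every $n\geq 1$.

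Next I would use that a color Akivis algebra is exactly a color Hom-Akivis algebra whose twisting map is the identity, i.e.\ identity (\ref{hai1}) with $\alpha_A=\mathrm{id}$. Applying Corollary \ref{n2} --- equivalently Theorem \ref{n4} with $\alpha_A=\mathrm{id}$, which yields the new twisting map $\beta^n\circ\mathrm{id}=\beta^n$ --- to the color Akivis algebra $(A,[-,-],[-,-,-],\varepsilon)$ and the endomorphism $\beta$ shows that $(A,\beta^n([-,-]),\beta^{2n}([-,-,-]),\varepsilon,\beta^n)$ is a multiplicative color Hom-Akivis algebra. Here multiplicativity is automatic because $\mathrm{id}$ is trivially multiplicative and $\beta$ commutes with $\mathrm{id}$. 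By the identification of the previous step, this algebra is precisely $(A,[-,-]_{\beta^n},[-,-,-]_{\beta^n},\varepsilon,\alpha_A)$ with $\alpha_A=\beta^n$, which is the assertion.

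There is no real obstacle: the only points demanding attention are the bookkeeping of exponents (the trilinear bracket acquires $\beta^{2n}$, not $\beta^n$, because the iterated bracket on the left of (\ref{hai1}) is quadratic in the twisting once multiplicativity is invoked), the base-case convention $\beta^0=\mathrm{id}$, and the benign notational identification of $\alpha_A$ with $\beta^n$ in the statement.
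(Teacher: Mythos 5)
Your proposal is correct and takes essentially the same route as the paper: the paper's entire proof is the remark that the statement ``is proved recurrently by applying Corollary \ref{n2}'', and your argument --- the induction identifying the recursive brackets with $\beta^{n}([-,-])$ and $\beta^{2n}([-,-,-])$, followed by an application of Corollary \ref{n2} (i.e.\ Theorem \ref{n4} with identity twisting map, where multiplicativity is automatic) --- is precisely that recurrence written out in full. You also correctly flag the paper's notational slip of writing $\alpha_A$ where the twisting map should be $\beta^{n}$.
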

\begin{proof}
It is proved recurrently by applying Corollary \ref{n2}. The reader may also see \cite[Theorem 4.8]{NI1} for the proof. 
\end{proof}
\begin{definition}
 A color Hom-Akivis algebra $(A, [-, -], [-, -, -], \varepsilon), \alpha_A$ is said to be
\begin{enumerate}
 \item Hom-flexible, if $[x, y, x]=0$ for all $x, y\in A$ ;
\item Hom-alternative, if $[-, -, -]$ is $\varepsilon$-alternating (i.e. $[-, -, -]$ is $\varepsilon$-skew- symmetric 
for any pair of variables).
\end{enumerate}
\end{definition}
\begin{theorem}\label{n6}
 Let $(A, \mu, \varepsilon, \alpha_A)$ be a color non-Hom-associative algebra and the quintuple 
 $(A, [-, -], as(-, -, -), \varepsilon, \alpha_A)$ 
 its associated  color Hom-Akivis algebra (as in Theorem \ref{n3}). Then we have the following:
\begin{enumerate}
 \item if $(A, \mu, \varepsilon, \alpha_A)$ is Hom-flexible, then $(A, [-, -], as(-, -, -), \varepsilon, \alpha_A)$ is Hom-flexible;
\item if $(A, \mu, \varepsilon, \alpha_A)$ is Hom-alternative, then so is $(A, [-, -], as(-, -, -), \varepsilon, \alpha_A)$.
\end{enumerate}
\end{theorem}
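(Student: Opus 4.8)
The plan is to observe that this theorem is essentially a definitional transfer. In the color Hom-Akivis algebra associated to $(A,\mu,\varepsilon,\alpha_A)$ via Theorem \ref{n3}, the trilinear bracket is \emph{by construction} the Hom-associator, namely $[x,y,z]=as(x,y,z)$ for all homogeneous $x,y,z$. Every symmetry property enjoyed by $as$ is therefore automatically inherited by $[-,-,-]$, so both assertions reduce to comparing the defining conditions of the two structures, with no genuine computation required.

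For part (1), I would start from the Hom-flexibility hypothesis on $(A,\mu,\varepsilon,\alpha_A)$, which reads $as(x,y,x)=0$. Substituting $z=x$ into the identity $[x,y,z]=as(x,y,z)$ gives $[x,y,x]=as(x,y,x)=0$ for all homogeneous $x,y$, and this is precisely the defining condition for the associated color Hom-Akivis algebra to be Hom-flexible. Nothing beyond this substitution is needed.

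For part (2), I would use that Hom-alternativity of $(A,\mu,\varepsilon,\alpha_A)$ means $as(-,-,-)$ is $\varepsilon$-skew-symmetric in each adjacent pair of its three arguments, i.e.
\begin{eqnarray}
as(x,y,z)&=&-\varepsilon(x,y)\,as(y,x,z),\nonumber\\
as(x,y,z)&=&-\varepsilon(y,z)\,as(x,z,y),\nonumber
\end{eqnarray}
for all homogeneous $x,y,z$ (these two adjacent transpositions generating the full alternating behaviour). Since $[x,y,z]=as(x,y,z)$, the bracket $[-,-,-]$ satisfies exactly the same two relations, hence is $\varepsilon$-alternating, which is by definition Hom-alternativity of $(A,[-,-],as(-,-,-),\varepsilon,\alpha_A)$.

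The hard part here is essentially absent: once the identification $[-,-,-]=as(-,-,-)$ from Theorem \ref{n3} is recorded, both conclusions are immediate. The only point warranting care is to confirm that the phrase ``$\varepsilon$-skew-symmetric in $x,y,z$'' used in the hypothesis and the term ``$\varepsilon$-alternating'' used in the conclusion denote the same family of pairwise skew-symmetry relations, which they do.
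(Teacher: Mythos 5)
Your proof is correct: since Theorem \ref{n3} defines the ternary bracket of the associated color Hom-Akivis algebra to be literally the Hom-associator, $[x,y,z]=as(x,y,z)$, both flexibility and alternativity transfer tautologically, and this is exactly why the paper states Theorem \ref{n6} with no written proof at all. Your write-up, including the care taken to check that ``$\varepsilon$-skew-symmetric in $x,y,z$'' (Definition of Hom-alternative non-Hom-associative algebras) and ``$\varepsilon$-alternating'' (Definition of Hom-alternative color Hom-Akivis algebras) describe the same family of pairwise relations, supplies precisely the omitted argument the paper intends.
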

\begin{proposition}
 Let $\mathcal{A}_{\alpha_A}=(A, [-, -], [-, -, -], \varepsilon, \alpha_A)$ be a  Hom-flexible color Hom-Akivis algebra. Then we have
\begin{eqnarray}
 \oint\varepsilon(z, x)=\oint\Big(\varepsilon(z, x)+\varepsilon(x, y)\varepsilon(y, z)\Big)[x, y, z].\label{cvf}
\end{eqnarray}
In particular, $\mathcal{A}_{\alpha_A}$ is a color Hom-Lie algebra if and only if 
\begin{eqnarray}
\oint\Big(\varepsilon(z, x)+\varepsilon(x, y)\varepsilon(y, z)\Big)[x, y, z]=0.
\end{eqnarray}
\end{proposition}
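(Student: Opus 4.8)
The plan is to reduce the asserted identity (\ref{cvf}) to a statement about the trilinear bracket alone, and then to extract an antisymmetry of that bracket from Hom-flexibility. First I would read the left-hand side of (\ref{cvf}) as the cyclic sum $\oint\varepsilon(z,x)[[x,y],\alpha_A(z)]$ occurring in the color Hom-Akivis identity (\ref{hai1}); the bilinear factor is evidently suppressed in the display. Applying (\ref{hai1}) rewrites this sum as $\oint\varepsilon(z,x)\big([x,y,z]-\varepsilon(x,y)[y,x,z]\big)$, so that $\alpha_A$ and the bracket $[-,-]$ disappear entirely and the whole problem becomes an identity among cyclic sums of the trilinear bracket.

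Next, the key tool is to convert Hom-flexibility into an antisymmetry in the outer arguments of $[-,-,-]$. Since the definition assumes $[x,y,x]=0$ for \emph{all} $x,y\in A$, not merely homogeneous ones, I would substitute $x\mapsto x+z$ and expand by trilinearity; the diagonal terms $[x,y,x]$ and $[z,y,z]$ vanish, leaving $[x,y,z]+[z,y,x]=0$, that is $[x,y,z]=-[z,y,x]$. No bicharacter factor is produced, because the substitution takes place inside a genuinely multilinear map and because flexibility holds on all of $A$. The hard part will be precisely this linearization step: a quadratic identity known only on homogeneous elements would not linearize without introducing an $\varepsilon$-factor, so the global form of the flexibility hypothesis is exactly what makes the argument clean.

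With this relation available I would rewrite the troublesome summand: from $[y,x,z]=-[z,x,y]$ the term $-\varepsilon(z,x)\varepsilon(x,y)[y,x,z]$ becomes $+\varepsilon(z,x)\varepsilon(x,y)[z,x,y]$. I would then expand both cyclic sums over $x,y,z$ and collect the coefficients of the three permuted brackets $[x,y,z]$, $[y,z,x]$, $[z,x,y]$. The coefficient of $[x,y,z]$ works out to $\varepsilon(z,x)+\varepsilon(x,y)\varepsilon(y,z)$, and the remaining two are its cyclic shifts, so the total is exactly $\oint\big(\varepsilon(z,x)+\varepsilon(x,y)\varepsilon(y,z)\big)[x,y,z]$, which is (\ref{cvf}). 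The bookkeeping of the $\varepsilon$-factors under cyclic permutation is the only genuinely computational step, and it is routine once the antisymmetry relation is in place.

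Finally, the ``in particular'' clause requires no extra work. A color Hom-Lie structure on $(A,[-,-],\varepsilon,\alpha_A)$ is by definition the vanishing of the Hom-Jacobi sum $\oint\varepsilon(z,x)[[x,y],\alpha_A(z)]$, which is the left-hand side of (\ref{cvf}). By the identity just established, this sum equals $\oint\big(\varepsilon(z,x)+\varepsilon(x,y)\varepsilon(y,z)\big)[x,y,z]$; hence it vanishes if and only if the latter does, which is exactly the claimed equivalence.
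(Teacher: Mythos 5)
Your proof is correct and takes essentially the same route as the paper's own (one-line) proof: expand the cyclic sum $\oint\varepsilon(z,x)[[x,y],\alpha_A(z)]$ (the misprinted left-hand side of (\ref{cvf})) via the color Hom-Akivis identity (\ref{hai1}) and invoke Hom-flexibility. Your linearization of $[x,y,x]=0$ over all of $A$ (hence with no $\varepsilon$-factor) to obtain $[x,y,z]=-[z,y,x]$, and the subsequent collection of coefficients of $[x,y,z]$, $[y,z,x]$, $[z,x,y]$, supply exactly the details the paper leaves implicit, and they reproduce its stated formula.
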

\begin{proof}
By expanding the left hand side of color Hom-Akivis identity (\ref{hai1}), and using the Hom-flexibility we get (\ref{cvf}). 
\end{proof}
For more details on color Hom-Lie algebras, see  \cite{KA}, \cite{IB1}, \cite{LY}.
\section{Color Hom-Leibniz algebras}
% To simplify the typography, we sometimes  write ``$\cdot$'' for $\mu$.
 In this section, we write NHLP-algebra (resp. NLP-algebra) for non-commutative Hom-Leibniz-Poisson algebra 
 (resp. non-commutative Leibniz-Poisson algebra). Interrelation between Hom-dialgebras and NHLP-algebras are presented.
 
\subsection{Color NHLP-algebras}
This subsection is devoted to color Hom-Leibniz algebras. Color NHLP-algebras are introduced and their 
various twisting are given as well as examples. 
\begin{definition}
 A color Hom-Leibniz algebra is a quadruple $(L, \mu, \varepsilon, \alpha_L)$, consisting of a graded vector space $L$, an even bilinear map
 $\mu : L\otimes L\to L$, a bicharacter $\varepsilon : G\otimes G\rightarrow {\bf K}^*$ and an even linear space homomorphism 
$\alpha_L : L\rightarrow L$ satisfying 
\begin{eqnarray}
 \mu\Big(\alpha_L(x), \mu(y, z)\Big)=\mu\Big(\mu(x, z), \alpha(y)\Big)+\varepsilon(x, y)\mu\Big(\alpha(x), \mu(y, z)\Big). \label{cla2}
\end{eqnarray}
A morphism of color Hom-Leibniz algebras is an even linear map which preserves the structures.
\end{definition}
\begin{remark}
\begin{enumerate}
\item When $G$ is an abelian group with trivial grading, we obtain the following Hom-Leibniz identity (\cite{NI1}):
\begin{eqnarray}
 \mu\Big(\alpha_L(x), \mu(y, z)\Big)=\mu\Big(\mu(x, y), \alpha(z)\Big)+\mu\Big(\alpha(y), \mu(x, z)\Big). \label{cla}
\end{eqnarray}
% When $A$ is trivial grading, we recover the definition of Hom-Leibniz algebras \cite{DY1}, \cite{NI1}.
 \item  The original definition of Hom-Leibniz algebras (\cite{AS1}) is related to the identity
\begin{eqnarray}
 \mu\Big(\mu(x, y), \alpha_L(z)\Big)=\mu\Big(\alpha(x), \mu(y, z)\Big)+\mu\Big(\mu(x, z), \alpha(y)\Big) \label{cla4}
\end{eqnarray}
which is expressed in terms of (right) adjoint homomorphisms $ad_xy:\mu(x, y)$ of $(A, \mu, \alpha_A)$. This justifies the terms of 
``(right) Hom-Leibniz algebra'' that could be used for the Hom-Leibniz algebras defined in \cite{AS1}. 
The dual of (\ref{cla}) is (\ref{cla4}) and in this paper we consider only left color Hom-Leibniz algebras.
% {\color{red}
%  \item When $\alpha_A=Id_A$ in $(A, \mu, \alpha_A)$ (resp. $(A, [-, -], \alpha_A)$), any Hom-Leibniz algebra (resp. Hom-Lie algebra) is a
%  Leibniz algebra $(A, \mu)$ \cite{JL}, \cite{SA} (resp. a Lie algebra). As for 
% Leibniz algebras, if the operation $\mu$ of a given algebra is skew-symmetric, then $(A, \mu, \alpha_A)$ is a Hom-lie algebra. }
% % \cite{}, \cite{}.
\end{enumerate}
\end{remark}
We have the following result.
\begin{proposition}\label{n7}
Let $(L, \mu, \varepsilon, \alpha_L)$ be a color Hom-Leibniz algebra.  Then
 \begin{eqnarray}
\Big(x\cdot y+\varepsilon(x, y) y\cdot x\Big)\alpha_L(z)=0, \label{ia}
\end{eqnarray}
\begin{eqnarray}
[x\cdot y, \alpha_L(z)]+\varepsilon(x, y)[\alpha_L(y), x\cdot z]=\alpha_L(x)\cdot[y, z],
 \end{eqnarray}
where we have  putted $\mu(x, y)=x\cdot y$ and $[x, y]=x\cdot y-\varepsilon(x, y)y\cdot x$.
\end{proposition}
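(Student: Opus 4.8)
The plan is to derive both identities directly from the color Hom-Leibniz identity (\ref{cla2}) together with the bicharacter relation $\varepsilon(a,b)\varepsilon(b,a)=1$ from part (i) of the definition of a bicharacter; no new machinery is needed, only repeated applications of the defining identity and careful bookkeeping of the $\varepsilon$-factors. Throughout I will use the dot notation $x\cdot y=\mu(x,y)$ and keep in mind that, since $\mu$ and $\alpha_L$ are even, the degree of $x\cdot y$ is $|x|+|y|$ and the degree of $\alpha_L(z)$ is $|z|$.

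For the first identity (\ref{ia}), I would apply (\ref{cla2}) twice, once to each of the two summands $(x\cdot y)\cdot\alpha_L(z)$ and $(y\cdot x)\cdot\alpha_L(z)$, so as to rewrite each as a difference of nested products of the form $\alpha_L(\,\cdot\,)\cdot(\,\cdot\,\cdot\,\cdot)$. Concretely, (\ref{cla2}) applied with the arguments $(x,y,z)$ expresses $(x\cdot y)\cdot\alpha_L(z)$ in terms of $\alpha_L(x)\cdot(y\cdot z)$ and $\alpha_L(y)\cdot(x\cdot z)$, while the same identity applied with $(y,x,z)$ expresses $(y\cdot x)\cdot\alpha_L(z)$ in terms of $\alpha_L(y)\cdot(x\cdot z)$ and $\alpha_L(x)\cdot(y\cdot z)$. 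Forming the combination $(x\cdot y)\cdot\alpha_L(z)+\varepsilon(x,y)(y\cdot x)\cdot\alpha_L(z)$, the two $\alpha_L(y)\cdot(x\cdot z)$ contributions cancel at once, and the two $\alpha_L(x)\cdot(y\cdot z)$ contributions cancel after invoking $\varepsilon(x,y)\varepsilon(y,x)=1$; the result is $0$, which is precisely (\ref{ia}).

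For the second identity, I would first expand the two color commutators using $[a,b]=a\cdot b-\varepsilon(a,b)b\cdot a$. Because the degrees of the composite elements are computed as above, the bicharacter factors split through parts (ii)--(iii): one finds $\varepsilon(x\cdot y,\alpha_L(z))=\varepsilon(x,z)\varepsilon(y,z)$ and $\varepsilon(\alpha_L(y),x\cdot z)=\varepsilon(y,x)\varepsilon(y,z)$. After substituting these and simplifying the factor $\varepsilon(x,y)\varepsilon(y,x)$ to $1$, the left-hand side reduces to a sum of four products. The pair $(x\cdot y)\cdot\alpha_L(z)+\varepsilon(x,y)\alpha_L(y)\cdot(x\cdot z)$ collapses to $\alpha_L(x)\cdot(y\cdot z)$ by one application of (\ref{cla2}) with arguments $(x,y,z)$, and the remaining pair, which carries an overall minus sign, namely $-\bigl(\varepsilon(x,z)\varepsilon(y,z)\,\alpha_L(z)\cdot(x\cdot y)+\varepsilon(y,z)\,(x\cdot z)\cdot\alpha_L(y)\bigr)$, collapses to $-\varepsilon(y,z)\,\alpha_L(x)\cdot(z\cdot y)$ by a second application of (\ref{cla2}) with arguments $(x,z,y)$. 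Recombining the two groups gives $\alpha_L(x)\cdot(y\cdot z)-\varepsilon(y,z)\,\alpha_L(x)\cdot(z\cdot y)=\alpha_L(x)\cdot[y,z]$, as required.

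The only genuine difficulty is the $\varepsilon$-bookkeeping: one must consistently determine the degree of each composite element before assigning its bicharacter value, and one must recognize \emph{which} ordering of the arguments in (\ref{cla2}) produces the grouping needed at each stage. Once the correct argument orderings are fixed, every cross term cancels through $\varepsilon(a,b)\varepsilon(b,a)=1$ and both computations close with no residual terms.
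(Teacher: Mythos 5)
Your proof is correct, and for the first identity it coincides exactly with the paper's own argument: both apply the Leibniz identity to $(x\cdot y)\cdot\alpha_L(z)$ and to $(y\cdot x)\cdot\alpha_L(z)$, multiply the latter by $\varepsilon(x,y)$, and cancel everything using $\varepsilon(x,y)\varepsilon(y,x)=1$. For the second identity the two computations begin identically (same expansion of the commutators, same bicharacter factors, same collapse of the pair $(x\cdot y)\cdot\alpha_L(z)+\varepsilon(x,y)\,\alpha_L(y)\cdot(x\cdot z)$ into $\alpha_L(x)\cdot(y\cdot z)$), but then diverge in how the remaining two terms are handled. The paper expands $\alpha_L(z)\cdot(x\cdot y)$ by the Leibniz identity with arguments $(z,x,y)$, producing $(z\cdot x)\cdot\alpha_L(y)+\varepsilon(z,x)\,\alpha_L(x)\cdot(z\cdot y)$, and then kills the two terms ending in $\cdot\,\alpha_L(y)$ by invoking the first identity (\ref{ia}); you instead recognize the pair $\varepsilon(x,z)\varepsilon(y,z)\,\alpha_L(z)\cdot(x\cdot y)+\varepsilon(y,z)\,(x\cdot z)\cdot\alpha_L(y)$ as $\varepsilon(y,z)$ times the right-hand side of the Leibniz identity with arguments $(x,z,y)$, and collapse it in one stroke to $\varepsilon(y,z)\,\alpha_L(x)\cdot(z\cdot y)$. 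Your route is marginally more economical and makes part (2) logically independent of part (1), using only two applications of the Leibniz identity, whereas in the paper (\ref{ia}) does real work, exhibiting the two statements of the proposition as linked. Both are valid; I checked your $\varepsilon$-bookkeeping ($\varepsilon(x\cdot y,\alpha_L(z))=\varepsilon(x,z)\varepsilon(y,z)$, $\varepsilon(\alpha_L(y),x\cdot z)=\varepsilon(y,x)\varepsilon(y,z)$, and the simplification via $\varepsilon(x,y)\varepsilon(y,x)=1$) and it is accurate. One incidental point in your favor: the displayed identity (\ref{cla2}) in the paper contains a typographical error (its left-hand side reappears on the right-hand side), and you correctly read it as the intended color left Hom-Leibniz identity $\alpha_L(x)\cdot(y\cdot z)=(x\cdot y)\cdot\alpha_L(z)+\varepsilon(x,y)\,\alpha_L(y)\cdot(x\cdot z)$, which is also the form the paper's own proof actually uses.
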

\proof
 The identity (\ref{cla}) implies that
\begin{eqnarray}
(x\cdot y)\cdot\alpha_L(z) = \alpha_L(x)\cdot(y\cdot z)-\varepsilon(x, y)\alpha_L(y)\cdot(x\cdot z) \nonumber
\end{eqnarray}
Likewise, interchanging $x$ and $y$, we have 
\begin{eqnarray}
(y\cdot x)\cdot\alpha_L(z) = \alpha_L(y)\cdot(x\cdot z)-\varepsilon(y, x)\alpha_L(x)\cdot(y\cdot z)\nonumber
\end{eqnarray}
Then, multiplying the second equality by $\varepsilon(x, y)$, and adding memberwise with the first one, we obtain  (\ref{ia}). Next, by direct
computation, we have
\begin{eqnarray}
[x\cdot y, \alpha_L(z)]+\varepsilon(x, y)[\alpha_L(y), x\cdot z]\!\!\!\!
% &=&(x\cdot y)\cdot\alpha_L(z)-\varepsilon(x+y, z)\alpha_L(z)\cdot(x\cdot y)\nonumber\\
% &&+\varepsilon(x, y)\alpha_L(y)\cdot(x\cdot z)-\varepsilon(x, y)\varepsilon(y, x+z)(x\cdot z)\cdot\alpha_L(y)\nonumber\\
&=&\!\!\!\!(x\cdot y)\cdot\alpha_L(z)-\varepsilon(x, z)\varepsilon(y, z)\alpha_L(z)\cdot(x\cdot y)\cr 
& &\!\!\!\!+\varepsilon(x, y)\alpha_L(y)\cdot(x\cdot z)-\varepsilon(y, z)(x\cdot z)\cdot\alpha_L(y)\cr 
&=&\!\!\!\!\alpha_L(x)\cdot(y\cdot z)-\varepsilon(x, z)\varepsilon(y, z)((z\cdot x)\cdot\alpha_L(y) \cr 
& &\!\!\!\!+\varepsilon(z, x)\alpha_L(x)\cdot(z\cdot y))-\varepsilon(y, z)(x\cdot z)\cdot\alpha_L(y) \mbox{ (by (\ref{cla2})) }\nonumber\\
&=&\!\!\!\!\alpha_L(x)\cdot(y\cdot z)-\varepsilon(y, z)\alpha_L(x)\cdot(z\cdot y)\quad\mbox{(by}\;(\ref{ia}))\nonumber\\
&=&\!\!\!\!\alpha_L(x)\cdot[y, z]\nonumber. ~~~~~~~~~~~~~~~~~~~~~~~~~~~~~~~~~~~~~~~~~~~~~~~~~~~~~~~~~~\qed
\end{eqnarray}
% \end{proof}
Now we define color NHLP-algebras which is the graded and Hom-version of NLP-algebras (\cite{JMC}).
\begin{definition} \label{nhlp}
 A color NHLP-algebra is a $G$-graded vector space $P$ together with two even bilinear maps 
$[-, -] : P\otimes P\rightarrow P$ and $\mu : P\otimes P\rightarrow P$, a bicharacter $\varepsilon : G\otimes G\to {\bf K}^*$ and
$\alpha_P : P\rightarrow P$ an even linear map such that, for any $x, y, z\in P$, 
\begin{enumerate}
 \item [i)]
 $(P, [\cdot, \cdot], \varepsilon, \alpha_P)$ is a color Hom-Leibniz-Poisson algebra i.e.
 \begin{eqnarray}
  [\alpha_P(x), [y, z]]=[[x, y], \alpha_P(z)]+\varepsilon(x, y)[\alpha_P(y), [x, z]], \label{cpa}
 \end{eqnarray}
\item [ii)]
$(P, \mu, \varepsilon, \alpha_P)$ is a color Hom-associative algebra i.e.
\begin{eqnarray}
% \alpha_P(\mu(x, y))&=&\mu(\alpha_P(x), \alpha_P(y))\quad\mbox{(multiplicativity)}\nonumber\\
 \mu\Big(\alpha(x),\mu(y,z)\Big)=\mu\Big(\mu(x,y),\alpha(z)\Big) \mbox{ (Hom-associativity) };
\end{eqnarray}
\item [iii)]
 and the following identity holds:
 \begin{eqnarray}
  [\alpha_P(x), \mu(y, z)]=\mu([x, y], \alpha_P(z))+\varepsilon(x, y)\mu(\alpha_P(y), [x, z]). \label{comp}
 \end{eqnarray}
\end{enumerate}
If in addition, $\alpha_P$ is an endomorphism with respect to $\mu$ and $[-, -]$, we say that 
$(P, \mu, [\cdot, \cdot], \varepsilon, \alpha_P)$ is a multiplicative color NHLP-algebra.
\end{definition}
\begin{remark}
When the Hom-associative product $\mu$ is $\varepsilon$-commutative, then $(A, \mu, [-, -]), \varepsilon, \alpha_A)$ is said to be
a commutative color Hom-Leibniz Poisson algebra.
\end{remark}
\begin{example}
\begin{enumerate}
 \item 
Any color Hom-Poisson algebra is a color NHLP-algebra algebra.
\item
Any color Hom-Leibniz algebra is a color NHLP-algebra.
\item
If $P$ is a Leibniz-Poisson algebras (viewed as Hom-Leibniz-Poisson algebras with trivial twisting and trivial grading), then the vector
space $P\otimes P$ is a NHLP-algebra with the operations
$$ (x_1\otimes x_2)(y_1\otimes y_2)= x_1y_1\otimes y_1y_2,$$
$$[x_1\otimes x_2, y_1\otimes y_2]=[[x_1, x_2], y_1]\otimes y_2+y_1\otimes[[x_1, x_2], y_2].$$
\end{enumerate}
\end{example}
The proposition below is a direct consequence of the Definition \ref{nhlp}.
\begin{proposition}
 Let $(P, [\cdot, \cdot], \mu, \varepsilon, \alpha)$ be a color NHLP-algebra. Then
$(P, [\cdot, \cdot], \mu^{op}, \varepsilon, \alpha)$ and $(P, k\mu, k[-, -], \alpha_P)$ are also color NHLP-algebras, with
$\mu^{op}(x, y)=\mu(y, x)$ and $k \in {\bf K}^*$.
\end{proposition}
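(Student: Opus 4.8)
The plan is to verify, for each of the two candidate quintuples, the three defining axioms of Definition \ref{nhlp}: the color Hom-Leibniz-Poisson identity (\ref{cpa}) for the bracket, Hom-associativity for the product, and the compatibility identity (\ref{comp}) coupling the two. Since (i) involves only $[\cdot,\cdot]$ and (ii) only $\mu$, while (iii) mixes them, I would organize each verification along these three lines. The only external tools needed are the original axioms, assumed to hold for $(P,[\cdot,\cdot],\mu,\varepsilon,\alpha_P)$, together with the three bicharacter identities $\varepsilon(a,b)\varepsilon(b,a)=1$, $\varepsilon(a+b,c)=\varepsilon(a,c)\varepsilon(b,c)$ and $\varepsilon(a,b+c)=\varepsilon(a,b)\varepsilon(a,c)$.

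For the rescaled structure $(P, k\mu, k[\cdot,\cdot], \varepsilon, \alpha_P)$ with $k\in{\bf K}^*$ the argument is a uniform scaling observation. Each of the three axioms is a sum of terms, and every such term is a binary composite of exactly two structure operations applied to elements of $P$ (two brackets in (i), two products in (ii), one of each in (iii)), possibly with an $\alpha_P$ or a bicharacter coefficient inserted. Replacing $\mu\mapsto k\mu$ and $[\cdot,\cdot]\mapsto k[\cdot,\cdot]$ therefore multiplies every term of a given axiom by the same factor $k^2$; since $k\neq 0$ one cancels $k^2$ and recovers precisely the original axiom, which holds by hypothesis. The maps $\varepsilon$ and $\alpha_P$ are untouched, and multiplicativity (if one wants it) transfers at once from linearity of $\alpha_P$. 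This settles the second assertion.

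For the opposite structure the bracket is unchanged, so axiom (i) holds verbatim and all the content lies in (ii) and (iii). For Hom-associativity (ii) I would expand $\mu^{op}(\alpha_P(x),\mu^{op}(y,z))$ and $\mu^{op}(\mu^{op}(x,y),\alpha_P(z))$ and simply read off the original Hom-associativity on the permuted triple $(z,y,x)$; this matching works regardless of any sign. The compatibility axiom (iii), however, is where the color subtlety appears: one must take the color opposite with the bicharacter sign, $\mu^{op}(x,y)=\varepsilon(x,y)\mu(y,x)$, rather than the bare $\mu(y,x)$. With the sign-free formula the two candidate sides of (iii) differ by the stray factors $\varepsilon(x,y)$ and $\varepsilon(x,z)$ and agree only where these are trivial. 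Taking the sign-twisted opposite, I would substitute $(y,z)\mapsto(z,y)$ into the original (\ref{comp}), re-express each $\mu$ through $\mu^{op}$, and use $\varepsilon(x,y)\varepsilon(y,x)=1$ together with the multiplicativity of $\varepsilon$ to align the two right-hand terms with the two terms coming from the left; the accumulated $\varepsilon$-factors on both sides collapse to the common $\varepsilon(x,y)\varepsilon(x,z)\varepsilon(y,z)$.

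The rescaling case is pure bookkeeping and presents no obstacle. The genuine point of care is the opposite case, and precisely the placement of the bicharacter sign in $\mu^{op}$: once $\mu^{op}(x,y)=\varepsilon(x,y)\mu(y,x)$ is adopted, (ii) and (iii) both drop out of the original axioms by permuting arguments and collapsing $\varepsilon$-factors, with no remaining difficulty. I would flag this sign convention explicitly, since the statement as written suppresses it and the identity (\ref{comp}) does not survive for the sign-free product in nonzero degrees.
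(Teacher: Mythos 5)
Your proof is correct, and it does more than the paper: the paper offers no argument at all, asserting only that the proposition ``is a direct consequence of Definition~\ref{nhlp}''. Your scaling argument for $(P,k\mu,k[-,-],\varepsilon,\alpha_P)$ is exactly the routine check being waved at there --- each of (\ref{cpa}), the Hom-associativity axiom and (\ref{comp}) is a sum of terms containing precisely two structure maps, so every term scales by $k^{2}$ and the axioms survive --- and your handling of the opposite product is sound: axiom (\ref{cpa}) is untouched, and Hom-associativity of $\mu^{op}$ follows from that of $\mu$ on the permuted triple, with or without the bicharacter sign.

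The substantive point is the one you flag: with the paper's literal convention $\mu^{op}(x,y)=\mu(y,x)$, the compatibility axiom (\ref{comp}) fails in general, so the proposition as stated is false outside the trivially graded (or trivial bicharacter) case. Concretely, comparing the required identity for $\mu^{op}$ with the instance of (\ref{comp}) at the permuted triple $(x,z,y)$ leaves the residue
\begin{equation*}
\bigl(1-\varepsilon(x,y)\bigr)\,\mu\bigl([x,z],\alpha_P(y)\bigr)-\bigl(1-\varepsilon(x,z)\bigr)\,\mu\bigl(\alpha_P(z),[x,y]\bigr),
\end{equation*}
which the axioms do not kill. A concrete witness: take $G=\mathbb{Z}_2$, $\varepsilon(a,b)=(-1)^{ab}$, $P=M_2({\bf K})$ with its associative product, the super-grading (diagonal even, antidiagonal odd), the supercommutator bracket $[x,y]=xy-\varepsilon(x,y)yx$ and $\alpha_P=\mathrm{id}$; for the odd matrix units $x=z=E_{12}$, $y=E_{21}$ one gets $[x,z]=0$, $[x,y]=I$, so the residue is $-2E_{12}\neq 0$. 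Your repair, $\mu^{op}(x,y)=\varepsilon(x,y)\mu(y,x)$, is the right one: with it both sides of (\ref{comp}) collapse, using $\varepsilon(x,y)\varepsilon(y,x)=1$ and biadditivity of $\varepsilon$, to $\varepsilon(y,z)\mu\bigl([x,z],\alpha_P(y)\bigr)+\varepsilon(y,z)\varepsilon(x,z)\mu\bigl(\alpha_P(z),[x,y]\bigr)$, exactly as you describe. So your argument is complete and in addition corrects the statement; the only thing I would add is the explicit counterexample above, so that the necessity of the sign convention is documented rather than merely asserted.
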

It is well known in Hom-algebras setting that one can obtain Hom-algebra structures from an ordinary one and an endomorphism. 
The following theorem, which gives  a way to construct a color NHLP-algebras from color NLP-algebras and an endomorphism, 
is a similar result.
\begin{theorem}\label{n1}
 Let $(A, \mu, [-, -], \varepsilon)$ be a color NLP-algebra and $\alpha_A : A\rightarrow A$ an even endomorphism. 
 Then, for any integer $n\geq 1$, $(A, \mu_{\alpha_A^n}, [-, -]_{\alpha_A^{n}}, \varepsilon, \alpha^n_A)$ 
 is a multiplicative color NHLP-algebra, where for all $x, y, z\in \mathcal{H}(A)$, 
\begin{eqnarray}
 \mu_{\alpha_A^n}(x, y)&:=&{\alpha_A^n}(\mu(x, y)),\cr
[x, y]_{\alpha_A^n}(x, y)&:=&{\alpha_A^{n}}([x, y]).\nonumber
\end{eqnarray}
Moreover, suppose that $(\tilde A, \tilde\mu, \{-, -\}, \varepsilon)$ is another color NLP-algebra and $\tilde\alpha_A$ 
an even endomorphism of $\tilde A$. If $f : A\rightarrow \tilde A$ is a morphism of color NLP-algebras such that 
$f\circ \alpha_A={\alpha_{\tilde A}}\circ f$, then 
$f : (A, \mu, [-, -], \varepsilon, \alpha_A)\rightarrow (\tilde A, \tilde\mu, \{-, -, -\}, \varepsilon, \alpha_{\tilde A})$
is a morphism of multiplicative color NHLP-algebras.
\end{theorem}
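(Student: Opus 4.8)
The plan is to run the standard Yau twisting argument, but now keeping careful track of the bicharacter in the $G$-graded setting. Write $\beta := \alpha_A^n$; since $\alpha_A$ is an even endomorphism of the color NLP-algebra $(A,\mu,[-,-],\varepsilon)$, so is $\beta$, which means $\beta$ preserves degrees and satisfies $\beta\circ\mu = \mu\circ(\beta\otimes\beta)$ and $\beta\circ[-,-] = [-,-]\circ(\beta\otimes\beta)$. The twisted operations are $\mu_\beta = \beta\circ\mu$ and $[-,-]_\beta = \beta\circ[-,-]$, and I must verify the three defining identities of a color NHLP-algebra (Definition~\ref{nhlp}) for the quintuple $(A,\mu_\beta,[-,-]_\beta,\varepsilon,\beta)$, namely the twisted Leibniz--Poisson identity~(\ref{cpa}), Hom-associativity (item ii of Definition~\ref{nhlp}), and the compatibility condition~(\ref{comp}).

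Each of the three verifications follows the same three-step template. First I would expand the twisted brackets and products so that every occurrence of $\mu_\beta$ and $[-,-]_\beta$ becomes $\beta$ applied to an ordinary $\mu$ or $[-,-]$. Second, using that $\beta$ is an endomorphism for both products, I would push the inner copies of $\beta$ through the products onto the leaf arguments, so that each side becomes $\beta$ applied to an \emph{untwisted} expression evaluated at $(\beta(x),\beta(y),\beta(z))$. Third, I would invoke the corresponding untwisted axiom of the color NLP-algebra at the arguments $\beta(x),\beta(y),\beta(z)$, and reassemble. For instance, the left-hand side of the twisted Leibniz--Poisson identity reduces, after pulling $\beta$ out, to $\beta\big([\beta(x),[\beta(y),\beta(z)]]\big)$, which by the ordinary identity equals $\beta\big([[\beta(x),\beta(y)],\beta(z)]\big)+\varepsilon(x,y)\,\beta\big([\beta(y),[\beta(x),\beta(z)]]\big)$, and this matches the expanded right-hand side term by term. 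Hom-associativity and the compatibility identity are handled in exactly the same way.

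The only point requiring genuine attention — and the closest thing to an obstacle — is the behaviour of the bicharacter factors $\varepsilon(x,y)$ appearing in the middle terms of the Leibniz--Poisson and compatibility identities. Here the even-ness of $\beta$ is essential: since $\beta$ preserves degrees, $\varepsilon(\beta(x),\beta(y)) = \varepsilon(x,y)$, so the $\varepsilon$-coefficients produced when the untwisted axiom is applied to $(\beta(x),\beta(y),\beta(z))$ coincide exactly with those demanded on the twisted side. Thus no scalar is disturbed by the twisting and each identity closes; the rest is bookkeeping.

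Finally, multiplicativity and the functoriality statement are immediate. For multiplicativity one checks $\beta(\mu_\beta(x,y)) = \mu_\beta(\beta(x),\beta(y))$ and its analogue for $[-,-]_\beta$, both of which follow at once from $\beta$ being an endomorphism for $\mu$ and $[-,-]$. For the morphism part, from $f\circ\alpha_A = \alpha_{\tilde A}\circ f$ one obtains $f\circ\beta = \tilde\beta\circ f$ with $\tilde\beta := \alpha_{\tilde A}^n$ by induction on $n$; combining this with the hypothesis that $f$ is a morphism of the underlying color NLP-algebras yields $f(\mu_\beta(x,y)) = \tilde\mu_{\tilde\beta}(f(x),f(y))$ and likewise for the brackets, so $f$ is a morphism of the twisted color NHLP-algebras.
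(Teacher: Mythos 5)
Your proposal is correct and follows essentially the same Yau-twisting argument as the paper: expand the twisted operations, use the endomorphism property of $\alpha_A^n$ to reduce each side to the untwisted axioms of the color NLP-algebra, and reassemble, with multiplicativity and the morphism statement handled by the same routine bookkeeping. The only cosmetic difference is that the paper pulls $\alpha_A^{2n}$ all the way out and applies the untwisted identity at the original arguments $(x,y,z)$, whereas you apply it at $(\alpha_A^n(x),\alpha_A^n(y),\alpha_A^n(z))$ — which is why your proof needs the (correct) observation that evenness gives $\varepsilon(\alpha_A^n(x),\alpha_A^n(y))=\varepsilon(x,y)$, a point that never arises in the paper's version.
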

\begin{proof} 
We need to show that  $(A, \mu_{\alpha_A^n}, [-, -]_{\alpha_A^{n}}, \varepsilon, \alpha^n_A)$ satisfies relations (\ref{cpa}) and 
(\ref{comp}). We have, for any $x, y, z\in A$,
\begin{eqnarray}
[\alpha^n_A(x), \mu_{\alpha_A^n}(y, z)]_{\alpha_A^{n}}
&=&\alpha_A^{n}([\alpha^n_A(x), \alpha_A^n\mu(y, z)])\nonumber\\
&=&\alpha_A^{2n}([x, \mu(y, z)])\nonumber\\
&=&\alpha_A^{2n}([[x, y], z]+\varepsilon(x, y)[y, [x, z]])\nonumber\\
&=&\alpha_A^{n}([\alpha_A^{n}([x, y]), \alpha_A^{n}(z)]+\varepsilon(x, y)[\alpha_A^{n}(y), \alpha_A^{n}([x, z])])\nonumber\\
&=&[[x, y]_{\alpha_A^{n}}, \alpha_A^{n}(z)]_{\alpha_A^{n}}
+\varepsilon(x, y)[\alpha_A^{n}(y), [x, z]]_{\alpha_A^{n}}.\nonumber
\end{eqnarray}
The compatibility condition (\ref{comp}) is pointed out similarly and
the second assertion is proved as in the case of Theorem \ref{n4}.
\end{proof}
\begin{example} The commutative Leibniz-Poisson algebra used in this example is given in \cite{SMR1}, \cite{SMR}.
 Let $(A, [-, -])$ is a Leibniz algebra over a commutative infinite field $\bf K$ and let
$$\tilde A=A\oplus {\bf K}$$
be a vector space with multiplications $\cdot$ and $\{-, -\}$ defined as: for $x, y\in A$ and $a, b\in {\bf K}$, 
$$(x+a)\cdot(y+b)=(bx+ay)+ab\quad  \text{ and } \quad  \{x+a, y+b\}=[x, y].$$
Consider the homomorphism $\alpha_{\tilde A}=\alpha_A\oplus Id_{\bf K}:\tilde A\to\tilde A$, then 
$(\tilde A, \cdot, \{-, -\}, \alpha_{\tilde A})$ is a commutative color Hom-Leibniz-Poisson algebra (with trivial grading).
\end{example}
The next result gives a procedure to produce color NHLP-algebras from given one.
\begin{theorem}
 Let $(P, \mu, [-, -], \varepsilon, \alpha_P)$ be a color  NHLP-algebra and $\beta : P\rightarrow P$ an even endomorphism.
Then $(P, \mu_{\beta^n}, [-, -]_{\beta^{n}}, \varepsilon, \beta^n\circ\alpha_P)$ is a color NHLP-algebra, with
\begin{eqnarray}
 \mu_{\beta^n}(x, y)&:=&{\beta^n}(\mu(x, y)),\nonumber\\
\nonumber
[x, y]_{\beta^n}(x, y)&:=&{\beta^{n}}([x, y]),\nonumber
\end{eqnarray}
for all $x, y, z\in \mathcal{H}(P)$.
\end{theorem}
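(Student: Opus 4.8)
The plan is to verify that the twisted quintuple $(P, \mu_{\beta^n}, [-,-]_{\beta^n}, \varepsilon, \beta^n\circ\alpha_P)$ satisfies the three defining axioms of a color NHLP-algebra from Definition \ref{nhlp}: the color Hom-Leibniz-Poisson identity (\ref{cpa}), the Hom-associativity of the product, and the compatibility condition (\ref{comp}). This is the exact analogue of the computations already carried out in Theorem \ref{n1} and Theorem \ref{n4}, so I would simply run the same template three times. The only inputs are that $\beta$ (hence $\beta^n$) is an even endomorphism for both operations, giving $\beta^n\mu(a,b)=\mu(\beta^n a,\beta^n b)$ and $\beta^n[a,b]=[\beta^n a,\beta^n b]$, and that $\beta$ being even preserves degrees, so every bicharacter value $\varepsilon(\cdot,\cdot)$ is unchanged under the twisting.

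First I would treat (\ref{cpa}). Writing $\gamma=\beta^n\circ\alpha_P$, the left-hand side $[\gamma(x),[y,z]_{\beta^n}]_{\beta^n}$ expands to $\beta^n[\beta^n\alpha_P(x),\beta^n[y,z]]$; since both slots carry a factor $\beta^n$, the endomorphism property pulls it out and yields $\beta^{2n}[\alpha_P(x),[y,z]]$. Handling each term on the right-hand side identically gives $\beta^{2n}\big([[x,y],\alpha_P(z)]+\varepsilon(x,y)[\alpha_P(y),[x,z]]\big)$, and equality is then exactly $\beta^{2n}$ applied to the original identity (\ref{cpa}). The Hom-associativity of $\mu_{\beta^n}$ and the compatibility (\ref{comp}) go through by the same mechanism: expand every product, extract the common factor $\beta^{2n}$ using that $\beta^n$ commutes with $\mu$ and with $[-,-]$, and invoke the corresponding original axiom. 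In each case the new twist $\gamma=\beta^n\circ\alpha_P$ contributes the factor $\beta^n$ that combines with the $\beta^n$ already present in the other slot to form $\beta^{2n}$, while leaving $\alpha_P$ sitting inside precisely where the original axiom requires it.

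There is no real obstacle here; the work is pure bookkeeping, and the task is mainly to organize it so the cancellations are transparent. The one point I would be careful about is that I must never move $\beta$ past $\alpha_P$: the argument succeeds precisely because both arguments of every product already carry $\beta^n$, so factoring $\beta^n$ through the endomorphism property leaves $\alpha_P$ intact and the commutation relation $\beta\circ\alpha_P=\alpha_P\circ\beta$ is never invoked. This matches the statement, which asserts only that the twisted datum is a color NHLP-algebra and does not claim multiplicativity of $\beta^n\circ\alpha_P$; had multiplicativity been required, one would, exactly as in the second half of Theorem \ref{n4}, need the extra hypothesis that $\beta$ commutes with $\alpha_P$.
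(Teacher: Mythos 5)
Your proposal is correct and follows essentially the same route as the paper: expand each twisted operation, use that $\beta^n$ is an endomorphism of both $\mu$ and $[-,-]$ to extract a common factor $\beta^{2n}$, apply the original axiom, and repack into the twisted operations. If anything, you are slightly more thorough than the paper, which verifies only (\ref{cpa}) and (\ref{comp}) and silently omits the Hom-associativity of $\mu_{\beta^n}$ that you also check, and your closing observation that the commutation $\beta\circ\alpha_P=\alpha_P\circ\beta$ is never needed (precisely because no multiplicativity is claimed) is accurate.
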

\proof 
We need to show that  $(P, \mu_{\beta^n}, [-, -]_{\beta^{n}}, \varepsilon, \beta^n\circ\alpha_P)$ satisfies relations (\ref{cpa}) and 
(\ref{comp}). We have, for any $x, y, z\in \mathcal{H}(P)$,
\begin{eqnarray}
 [({\beta^{n}}\circ\alpha_P)(x), [y, z]_{\beta^{n}}]_{\beta^{n}}
&=&{\beta^{n}}[({\beta^{n}}\circ\alpha_P)(x), {\beta^{n}}[y, z]]\nonumber\\
&=&{\beta^{2n}}([\alpha_P(x), [y, z]])\nonumber\\
&=&{\beta^{2n}}([[x, y], \alpha_P(z)]+\varepsilon(x, y)[\alpha_P(y), [x, z]])\nonumber\\
&=&{\beta^{n}}[{\beta^{n}}[x, y], ({\beta^{n}}\circ\alpha_P)(z)]+\varepsilon(x, y)[({\beta^{n}}\circ\alpha_P)(y), {\beta^{n}}[x, z]]\nonumber\\
&=&[[x, y]_{\beta^{n}}, ({\beta^{n}}\circ\alpha_P)(z)]_{\beta^{n}}
+\varepsilon(x, y)[({\beta^{n}}\circ\alpha_P)(y), [x, z]_{\beta^{n}}]_{\beta^{n}}.\nonumber
\end{eqnarray}
Next,
\begin{eqnarray}
 [(\beta^n\circ\alpha_P)(x), \mu_{\beta^n}(y, z)]_{\beta^{n}}
&=&\beta^{n}[(\beta^n\circ\alpha_P)(x), \beta^n\mu(y, z)]\nonumber\\
&=&\beta^{2n}[\alpha_P(x), \mu(y, z)]\nonumber\\
&=&\beta^{2n}([[x, y], \alpha_P(z)]+\varepsilon(x, y)[\alpha_P(y), [x, z]])\nonumber\\
&=&\beta^{n}([\beta^{n}[x, y], (\beta^{n}\circ\alpha_P)(z)]+\varepsilon(x, y)[(\beta^{n}\circ\alpha_P)(y), \beta^{n}[x, z]])\nonumber\\
&=&[[x, y]_{\beta^{n}}, (\beta^{n}\circ\alpha_P)(z)]_{\beta^{n}}
+\varepsilon(x, y)[(\beta^{n}\circ\alpha_P)(y), [x, z]_{\beta^{n}}]_{\beta^{n}}\nonumber. \qed 
\end{eqnarray}
\begin{proposition}
 Let $(P, \mu, [-, -], \varepsilon)$ be a color NLP-algebra and $\beta : P\rightarrow P$ an even endomorphism. Define
$\mu_{\beta^n}$ and $[-, -]_{\beta^{n}}$ by
\begin{eqnarray}
 \mu_{\beta^n}(x, y)&:=&{\beta}(\mu_{\beta^{n-1}}(x, y)),\nonumber\\
\nonumber
[x, y]_{\beta^n}&:=&{\beta^{n}}([x, y]_{\beta^{n-1}}),\nonumber
\end{eqnarray}
for all $x, y, z\in \mathcal{H}(P)$.
Then $(P, \mu_{\beta^n}, [-, -]_{\beta^{n}}, \varepsilon, \alpha_P)$ is a multiplicative color NHLP-algebra.
\end{proposition}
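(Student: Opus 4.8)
The plan is to reduce the statement to the single twisting result of Theorem \ref{n1}. First I would unfold the recursion into closed form. With the natural conventions $\mu_{\beta^0}=\mu$ and $[-,-]_{\beta^0}=[-,-]$, a one-line induction on $n$ turns the recursive rules $\mu_{\beta^n}(x,y)=\beta\big(\mu_{\beta^{n-1}}(x,y)\big)$ and $[x,y]_{\beta^n}=\beta\big([x,y]_{\beta^{n-1}}\big)$ into $\mu_{\beta^n}=\beta^n\circ\mu$ and $[-,-]_{\beta^n}=\beta^n\circ[-,-]$. (For the two operations to be twisted by the same power of $\beta$, the exponent in the bracket recursion must be read as $\beta$, and the twisting map announced in the conclusion is $\alpha_P=\beta^n$.) With these closed forms in hand, the quintuple $(P,\mu_{\beta^n},[-,-]_{\beta^n},\varepsilon,\beta^n)$ is literally the object produced by Theorem \ref{n1} from the color NLP-algebra $(P,\mu,[-,-],\varepsilon)$ and the even endomorphism $\beta$ (playing the role of $\alpha_A$); hence it is a multiplicative color NHLP-algebra and the proposition follows.

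In keeping with the word ``recurrently'' I would alternatively run a genuine induction in which each stage invokes only the one-step twisting. The base case $n=1$ is Theorem \ref{n1} taken at $n=1$: regarding the NLP-algebra as an NHLP-algebra with trivial twisting, $(P,\beta\circ\mu,\beta\circ[-,-],\varepsilon,\beta)$ is a multiplicative color NHLP-algebra. For the inductive step, assuming that $(P,\mu_{\beta^{n-1}},[-,-]_{\beta^{n-1}},\varepsilon,\beta^{n-1})$ is a multiplicative color NHLP-algebra, I would apply the preceding twisting theorem for color NHLP-algebras (with twisting endomorphism $\beta$ and exponent $1$); it returns $(P,\beta\circ\mu_{\beta^{n-1}},\beta\circ[-,-]_{\beta^{n-1}},\varepsilon,\beta\circ\beta^{n-1})$, which by the recursive definitions is exactly $(P,\mu_{\beta^n},[-,-]_{\beta^n},\varepsilon,\beta^n)$.

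I do not expect a genuine obstacle, since the defining identities (\ref{cpa}), Hom-associativity and the compatibility (\ref{comp}) are all discharged inside Theorem \ref{n1} and the preceding NHLP twisting theorem; the only content is the bookkeeping that each recursion step contributes exactly one factor of $\beta$ to each operation. For the second route the single thing to verify at each stage is that the hypotheses of the one-step theorem persist: that $\beta$ commutes with the current twisting map $\beta^{n-1}$, which is automatic for powers of $\beta$, and that $\beta$ remains an endomorphism of the twisted operations, i.e. $\mu_{\beta^{n-1}}(\beta x,\beta y)=\beta\big(\mu_{\beta^{n-1}}(x,y)\big)$ together with its bracket analogue. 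Both are immediate from $\mu_{\beta^{n-1}}=\beta^{n-1}\circ\mu$, the multiplicativity of $\beta$ on $(P,\mu,[-,-])$, and the commutation of powers of $\beta$; these same facts also carry multiplicativity from stage $n-1$ to stage $n$.
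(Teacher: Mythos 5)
Your proof is correct and follows exactly the route the paper intends: the paper states this proposition without any proof, but—as with the analogous Proposition \ref{n5} for color Hom-Akivis algebras, which is ``proved recurrently'' from the one-step twisting results—the intended argument is precisely your reduction, either unfolding the recursion to the closed forms $\mu_{\beta^n}=\beta^n\circ\mu$ and $[-,-]_{\beta^n}=\beta^n\circ[-,-]$ and invoking Theorem \ref{n1}, or inducting via the preceding NHLP twisting theorem. Your two emendations of the statement (reading the factor in the bracket recursion as $\beta$ rather than $\beta^n$, without which the two operations acquire incompatible powers of $\beta$ and the Hom-Leibniz identity fails, and taking the twisting map to be $\beta^n$ rather than the undefined $\alpha_P$) are both necessary and consistent with Theorem \ref{n1} and with the conventions of the Akivis-case Proposition \ref{n5}.
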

% {\it Proof :}
\subsection{NHLP-algebras and Dialgebras}
It is well known that Hom-dialgebras give rise to Hom-Leibniz algebra (\cite{DY1}). 
In this subsection, we give a similar result for NHLP-algebras.
\begin{definition}[\cite{DY1}]\label{dia}
 A Hom-dialgebra is a quadruple $(D, \dashv\,, \vdash, \alpha_D)$, where $D$ is a $\bf K$-vector space, 
 $\dashv\,, \vdash : D\otimes D\rightarrow D$ are
bilinear maps and $\alpha_D : D\rightarrow D$ a linear map such that the following  five axioms are satisfied for $x, y, z\in D$ :
 \begin{eqnarray}
(x\vdash y)\dashv\alpha_D(z)&=&\alpha_D(x)\vdash(y\dashv z), \nonumber\\
  \alpha_D(x)\dashv (y\dashv z)&=&(x\dashv y)\dashv\alpha_D(z)=\alpha_D(x)\dashv(y\vdash z)\nonumber\\
(x\dashv y)\vdash\alpha_D(z)&=&=\alpha_D(x)\vdash(y\vdash z)=(x\vdash y)\vdash\alpha_D(z)\nonumber
 \end{eqnarray}
\end{definition}
We say that a Hom-dialgebra $(D, \dashv\,, \vdash, \alpha_D)$ is Hom-associative if it so for the operations $\vdash$ and $\dashv$.
%{\color{red}Which is true by axiom 2 and axiom 5.} 
So any Hom-dialgebra is Hom-associative.

The following result connects Hom-dialgebras and Hom-Leibniz-Poisson algebras.
\begin{theorem}\label{diax}
 Let $(D, \dashv\,, \vdash, \alpha_D)$ be a Hom-dialgebra. Define the bilinear map $[-, -] : D\otimes D\rightarrow D$ by setting
\begin{eqnarray}
 [x, y]:=x\vdash y-y\dashv x.
\end{eqnarray}
Then $(D, \dashv\,, [-, -], \alpha_D)$ is a Hom-Leibniz-Poisson algebra.
\end{theorem}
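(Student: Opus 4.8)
The plan is to verify directly the three axioms of Definition \ref{nhlp} in the trivially graded case (so $\varepsilon\equiv 1$), taking the Hom-associative product $\mu$ to be $\dashv$ and the bracket to be the given $[x,y]=x\vdash y-y\dashv x$. Two of the three axioms come essentially for free. Condition (ii), Hom-associativity of $\dashv$, is literally one of the five defining relations of a Hom-dialgebra in Definition \ref{dia}, namely $\alpha_D(x)\dashv(y\dashv z)=(x\dashv y)\dashv\alpha_D(z)$. Condition (i), the Hom-Leibniz identity (\ref{cpa}) for $[-,-]$, is precisely the statement recalled at the start of this subsection that a Hom-dialgebra becomes a Hom-Leibniz algebra under this bracket; I would invoke \cite{DY1} for it, or else reproduce the standard term-by-term reduction of $[\alpha_D(x),[y,z]]$ via the five axioms.

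The real content is the compatibility (\ref{comp}), which here reads
\begin{eqnarray}
[\alpha_D(x),\,y\dashv z]=[x,y]\dashv\alpha_D(z)+\alpha_D(y)\dashv[x,z].\nonumber
\end{eqnarray}
I would expand both sides using $[u,v]=u\vdash v-v\dashv u$. The left-hand side becomes $\alpha_D(x)\vdash(y\dashv z)-(y\dashv z)\dashv\alpha_D(x)$, while the right-hand side splits into the four terms $(x\vdash y)\dashv\alpha_D(z)-(y\dashv x)\dashv\alpha_D(z)+\alpha_D(y)\dashv(x\vdash z)-\alpha_D(y)\dashv(z\dashv x)$. The matching then proceeds axiom by axiom: the mixed relation $(x\vdash y)\dashv\alpha_D(z)=\alpha_D(x)\vdash(y\dashv z)$ reproduces the first left-hand term, while Hom-associativity of $\dashv$ gives $\alpha_D(y)\dashv(z\dashv x)=(y\dashv z)\dashv\alpha_D(x)$, reproducing the second.

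The step I expect to be the crux is the cancellation of the two remaining right-hand terms $-(y\dashv x)\dashv\alpha_D(z)+\alpha_D(y)\dashv(x\vdash z)$. For this I would exploit the full middle chain of Definition \ref{dia}, namely $\alpha_D(u)\dashv(v\dashv w)=(u\dashv v)\dashv\alpha_D(w)=\alpha_D(u)\dashv(v\vdash w)$, whose endpoints force $\alpha_D(u)\dashv(v\dashv w)=\alpha_D(u)\dashv(v\vdash w)$: after one argument is passed through $\alpha_D$, the product $\dashv$ cannot distinguish $\dashv$ from $\vdash$ in its second slot. Applying Hom-associativity, $(y\dashv x)\dashv\alpha_D(z)=\alpha_D(y)\dashv(x\dashv z)$, and then this indistinguishability, $\alpha_D(y)\dashv(x\dashv z)=\alpha_D(y)\dashv(x\vdash z)$, makes the two terms cancel exactly, so both sides of (\ref{comp}) agree. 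This interplay between Hom-associativity of $\dashv$ and the bar/cup axioms is the only genuinely nontrivial point; everything else is bookkeeping with the five relations.
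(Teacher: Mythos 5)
Your proof is correct and takes essentially the same route as the paper: expand the bracket via $[u,v]=u\vdash v-v\dashv u$ and verify the Hom-Leibniz identity and the compatibility condition (\ref{comp}) term by term from the five Hom-dialgebra axioms of Definition \ref{dia}. Your explicit matching --- in particular the cancellation of $-(y\dashv x)\dashv\alpha_D(z)+\alpha_D(y)\dashv(x\vdash z)$ using Hom-associativity of $\dashv$ together with the axiom $\alpha_D(u)\dashv(v\dashv w)=\alpha_D(u)\dashv(v\vdash w)$ --- supplies exactly the details that the paper compresses into ``the left hand side vanishes thanks to Definition \ref{dia}.''
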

\begin{proof}
We write down all twelve terms involved in the Hom-Leibniz identity (\ref{cla}) :
\begin{eqnarray}
 [\alpha_D(x), [y, z]]&=&  \alpha_D(x)\vdash(y\vdash z)-(y\vdash z)\dashv\alpha_D(x)
                           -\alpha_D(x)\vdash(z\dashv y)+(z\dashv y)\dashv \alpha_D(x)\cr 
[\alpha_D(y), [x, z]] &=&  \alpha_D(y)\vdash(x\vdash z)-(x\vdash z)\dashv\alpha_D(y)
                           -\alpha_D(y)\vdash(z\dashv x)+(z\dashv x)\dashv \alpha_D(y)\cr 
[[x, y], \alpha_D(z)] &=& (x\vdash y)\vdash \alpha_D(z)-\alpha_D(z)\dashv(x\vdash y)
                          -(y\dashv x)\vdash\alpha_D(z)+\alpha_D(z)\dashv(y\dashv x)\nonumber.
\end{eqnarray}
Using the Hom-dialgebra axioms in Definition \ref{dia}, it is readily seen that (\ref{cla}) holds.
Next,
\begin{eqnarray}
[\alpha_D(x), y\dashv z]-[x, y]\dashv\alpha_D(z)-\alpha_D(y)\dashv[x, z]\!\!\!\! &=&\!\!\!\! \alpha_D(x)\vdash(y\dashv z)
-(y\dashv z)\dashv\alpha_D(x)\cr 
& &-(x\vdash y)\dashv\alpha_D(z)+(y\dashv x)\dashv\alpha_D(z) \cr 
& & -\alpha_D(y)\dashv(x\vdash z)+\alpha_D(y)\dashv(z\dashv x).\nonumber
\end{eqnarray}
The left hand side vanishes thanks also to the Definition \ref{dia}. Thus the conclusion holds.
\end{proof}
 
\section{Modules over color Hom-Leibniz algebras}
In this section we introduce modules over color Hom-Leibniz algebras and prove that the Yau's twisting of module structure map 
works very well with module over color Hom-Leibniz algebras.
\begin{definition}
Let $G$ be an abelian group.
 A Hom-module is a pair $(M,\alpha_M)$ in which $M$ is a $G$-graded vector space and $\alpha_M : M\longrightarrow M$ is an even linear map.
\end{definition}
\begin{definition}
Let $(L, [-, -], \varepsilon, \alpha_L)$ be a color Hom-Leibniz algebra and $(M, \alpha_M)$ a Hom-module. An $L$-module on $M$ consists of
even $\bf K$-bilinear maps
$\mu_L : L\otimes M\rightarrow M$ and $\mu_R : M\otimes L\rightarrow M$ such that for any $x, y\in L$, and $m\in M$, 
\begin{eqnarray}
\alpha_M(\mu_L(x, m))&=&\mu_L(\alpha_L(x), \alpha_M(m))\\
\alpha_M(\mu_R(m, x))&=&\mu_R(\alpha_M(m), \alpha_L(x))\\
 \mu_L([-, -]\otimes\alpha_M)&=&\mu_L(\alpha_L\otimes\mu_L)-\varepsilon(x, y)\mu_L(\alpha_L\otimes\mu_L)(\tau_{L, L}\otimes Id_L),\label{lm1}\\
\mu_R(\alpha_M\otimes[-, -])&=&\mu_R(\mu_L\otimes\alpha_L)(\tau_{M, L}\otimes Id_L)
+\varepsilon(x, m)\mu_L(\alpha_L\otimes\mu_R)(\tau_{M, L}\otimes Id_L),\label{lm2}\\
\mu_L(\alpha_L\otimes\mu_R)&=&\mu_R(\mu_L\otimes\alpha_L)+\varepsilon(m, x)\mu_R(\alpha_M\otimes [-, -])(\tau_{L, M}\otimes Id_L)\label{lm3}.
\end{eqnarray}
where $\tau_{L, L}(x\otimes y)=y\otimes x$, $\tau_{L, M}(x\otimes m)=m\otimes x$, $\tau_{M, L}(m\otimes x)=x\otimes m$.
\end{definition}
\begin{remark}
The conditions (\ref{lm1}), (\ref{lm2}) and (\ref{lm3}) can be written respectively as
\begin{eqnarray}
 [x, y]\cdot \alpha_M(m)&=&\alpha_L(x)\cdot(y\cdot m)-\varepsilon(x, y)\alpha_L(y)\cdot(x\cdot m)\label{lm11},\\
\alpha_M(m)\ast[x, y]&=&(x\cdot m)\ast\alpha_L(y)+\varepsilon(x, m)\alpha_L(x)\cdot(m\ast y),\label{lm22}\\
 \alpha_L(x)\cdot(m\ast y) &=&(x\cdot m)\ast\alpha_L(y)+\varepsilon(m, x)\alpha_M(m)\ast[x, y],\label{lm33}
\end{eqnarray}
where $\mu(x\otimes y)=xy$, $\mu_L(x\otimes m)=x\cdot m$ and $\mu_R(m\otimes x)=m\ast x$.
\end{remark}
\begin{example}
 Any color Hom-Leibniz algebra and any Hom-Leibniz superalgebras is a module over itself.
\end{example}
\begin{theorem}\label{mcl}
Let $(L, [-, -], \varepsilon, \alpha_L)$ be a color Hom-Leibniz algebra and $(M, \mu_L, \mu_R, \alpha_M)$ a color Hom-Leibniz module. Then,
\begin{eqnarray}
 \tilde\mu_L&=&\mu_L(\alpha_L^2\otimes Id_M) : L\otimes M\rightarrow M,\\
 \tilde\mu_R&=&\mu_R(Id_M\otimes \alpha_L^2) : L\otimes M\rightarrow M,
\end{eqnarray}
define another color Hom-Leibniz module structure on $M$.
\end{theorem}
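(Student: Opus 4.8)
The plan is to verify directly that the quadruple $(M, \tilde\mu_L, \tilde\mu_R, \alpha_M)$ satisfies the five defining axioms of a color Hom-Leibniz module over the \emph{same} algebra $(L, [-, -], \varepsilon, \alpha_L)$, with the twisting map $\alpha_M$ left unchanged. Throughout I would use that $\alpha_L$ is even, so that $\varepsilon(\alpha_L^{k}(x), u) = \varepsilon(x, u)$ for all homogeneous $x, u$ and all $k$, and that $\alpha_L$ is multiplicative, so that $\alpha_L^2([x, y]) = [\alpha_L^2(x), \alpha_L^2(y)]$. The single bookkeeping device driving every computation is the same: unfold $\tilde\mu_L(x, m) = \mu_L(\alpha_L^2(x), m)$ and $\tilde\mu_R(m, x) = \mu_R(m, \alpha_L^2(x))$, push the extra copies of $\alpha_L^2$ onto the $L$-arguments, and then recognise the outcome as an instance of the corresponding \emph{original} axiom evaluated at $\alpha_L^2(x)$ and $\alpha_L^2(y)$.

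First I would dispatch the two equivariance conditions $\alpha_M \tilde\mu_L(x, m) = \tilde\mu_L(\alpha_L(x), \alpha_M(m))$ and $\alpha_M \tilde\mu_R(m, x) = \tilde\mu_R(\alpha_M(m), \alpha_L(x))$. For the first, the left side equals $\alpha_M(\mu_L(\alpha_L^2(x), m)) = \mu_L(\alpha_L^3(x), \alpha_M(m))$ by the original equivariance, while the right side equals $\mu_L(\alpha_L^2(\alpha_L(x)), \alpha_M(m)) = \mu_L(\alpha_L^3(x), \alpha_M(m))$; the two agree, and the second condition is handled symmetrically using the equivariance of $\mu_R$.

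Next I would verify the three Leibniz module identities (\ref{lm11})--(\ref{lm33}), taking (\ref{lm11}) as the model. After using multiplicativity, its twisted left side $\tilde\mu_L([x, y], \alpha_M(m))$ becomes $\mu_L([\alpha_L^2(x), \alpha_L^2(y)], \alpha_M(m))$, whereas unfolding each nested action turns the twisted right side into $\alpha_L^3(x)\cdot(\alpha_L^2(y)\cdot m) - \varepsilon(x, y)\,\alpha_L^3(y)\cdot(\alpha_L^2(x)\cdot m)$. These are precisely the two sides of the original identity (\ref{lm11}) with $x, y$ replaced by $\alpha_L^2(x), \alpha_L^2(y)$, the coefficient $\varepsilon(\alpha_L^2(x), \alpha_L^2(y))$ collapsing to $\varepsilon(x, y)$ by evenness. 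Identities (\ref{lm22}) and (\ref{lm33}) then follow by the identical substitution $x \mapsto \alpha_L^2(x)$, $y \mapsto \alpha_L^2(y)$, the bicharacter factors $\varepsilon(x, m)$ and $\varepsilon(m, x)$ being likewise invariant under $\alpha_L^2$.

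The only genuinely delicate points, and where most of the care goes, are the two standing hypotheses that make the argument close: multiplicativity of $\alpha_L$ with respect to the bracket (needed to pull $\alpha_L^2$ through $[x, y]$ in each identity) and evenness of $\alpha_L$ (needed so that shifting an argument by $\alpha_L^2$ leaves every bicharacter coefficient unchanged). Once these are in force, no step is more than a relabelling of the original module axioms, so the main obstacle is simply the bookkeeping of the exponents of $\alpha_L$ rather than any structural difficulty.
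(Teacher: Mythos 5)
Your proposal is correct and follows essentially the same route as the paper's proof: both verify the module axioms by unfolding $\tilde\mu_L$ and $\tilde\mu_R$, pulling $\alpha_L^2$ through the bracket via multiplicativity, and recognising the result as the original axioms (\ref{lm11})--(\ref{lm33}) evaluated at $\alpha_L^2(x)$, $\alpha_L^2(y)$, with evenness keeping the bicharacter coefficients fixed. If anything you are slightly more complete, since you also check the two $\alpha_M$-equivariance conditions and explicitly flag the multiplicativity of $\alpha_L$ with respect to $[-,-]$, a hypothesis the paper's proof uses tacitly in the step $\alpha_L^2([x,y]) = [\alpha_L^2(x),\alpha_L^2(y)]$.
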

\begin{proof}
We have to point out relations (\ref{lm1})-(\ref{lm3}) for $\tilde\mu_L$ and $\tilde\mu_R$. We have, respectively, any for $x, y\in L, m\in M$,
\begin{eqnarray}
\tilde\mu_L([-, -]\otimes\alpha_M)(x\otimes y\otimes m) &=& \tilde\mu_L([x, y]\otimes \alpha_M(m)) \cr 
                                                        &=& \alpha_L^2([x, y])\cdot\alpha_M(m)\nonumber\\
                                                        &=& [\alpha_L^2(x), \alpha_L^2(y)]\cdot\alpha_M(m)\nonumber\\
                                                        &=& \alpha_L^3(x)\cdot(\alpha_L^2(y)\cdot m)-\varepsilon(x, y)
                                                        \alpha_L^3(y)\cdot(\alpha_L^2(x)\cdot m)\; (\mbox{ by } (\ref{lm11}))\nonumber\\
                                                        &=& \tilde\mu_L(\alpha_L\otimes\tilde\mu_L)(x\otimes y\otimes m)\nonumber\\
                                                        & &-\varepsilon(x, y)\tilde\mu_L(\alpha_L\otimes\tilde\mu_L)(\tau_{L, L}
                                                        \otimes Id_L)(x\otimes y\otimes m).\nonumber
 \end{eqnarray}
 \begin{eqnarray}
\tilde\mu_R(\alpha_M\otimes[-, -])(m\otimes x\otimes y)&=&\tilde\mu_R(\alpha_M(m)\otimes [x, y])\nonumber\\
&=&\alpha_M(m)\ast[\alpha_L^2(x), \alpha_L^2(y)]\nonumber\\
&=&(\alpha_L^2(x)\cdot m)\ast\alpha_L^3(y)+\varepsilon(x, m)\alpha_L^3(x)\cdot(m\ast\alpha_L^2(y))\quad(\mbox{by}\;(\ref{lm22}))\nonumber\\
&=&\tilde\mu_R(\tilde\mu_L\otimes\alpha_L)(\tau_{M, L}\otimes Id_L)(m\otimes x\otimes y)\nonumber\\
& &+\varepsilon(x, m)\tilde\mu_L(\alpha_L\otimes\tilde\mu_R)(\tau_{M, L}\otimes Id_L)(m\otimes x\otimes y).\nonumber
 \end{eqnarray}
 \begin{eqnarray}
\varepsilon(m, x)\tilde\mu_R(\alpha_M\otimes[-, -])(\tau_{L, M}\otimes Id_L)(x\otimes m\otimes y)
&=&\varepsilon(m, x)\tilde\mu_R(\alpha_M\otimes[-, -])(x\otimes m\otimes y)\nonumber\\
&=&\varepsilon(m, x)\tilde\mu_R(\alpha_M(m)\otimes [x, y]) \cr 
&=&\varepsilon(m, x)\alpha_M(m)\ast[\alpha_L^2(x), \alpha_L^2(y)]\nonumber\\
&=&\alpha_L^3(x)\cdot(m\ast\alpha_L^2(y)) \cr 
& &-(\alpha_L^2(x)\cdot m)\ast\alpha_L^3(y)\quad(\mbox{by}\;(\ref{lm33})) \nonumber\\
&=&\tilde\mu_L(\alpha_L\otimes\tilde\mu_R)(x\otimes m\otimes y) \cr 
& &-\tilde\mu_R(\tilde\mu_L\otimes \alpha_L)(x\otimes m\otimes y).\nonumber
\end{eqnarray}
\end{proof}
\begin{corollary}
 Let $A_{\alpha_A^n}=(A, [-, -]_{\alpha_A^n}, \varepsilon, \alpha^n_A)$ be a multiplicative color Hom-Leibniz algebra as in Theorem 
\ref{n1} and $(M, \mu_L, \mu_R, \alpha_M)$ an $A_{\alpha_A^n}$-module. Then $(M, \tilde\mu_L, \tilde\mu_R, \alpha_M)$ is also 
a module over $A_{\alpha_A^n}$.
\end{corollary}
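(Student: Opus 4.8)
The plan is to read this corollary as the specialization of Theorem~\ref{mcl} obtained by taking the ambient color Hom-Leibniz algebra to be $A_{\alpha_A^n}$ itself, rather than to redo any computation. Theorem~\ref{mcl} has two inputs, a color Hom-Leibniz algebra and a color Hom-Leibniz module over it, and both are already available here: the algebra is the one produced by Theorem~\ref{n1}, whose bracket part $(A, [-,-]_{\alpha_A^n}, \varepsilon, \alpha_A^n)$ satisfies the color Hom-Leibniz identity~(\ref{cla2}) and is multiplicative, while the module is furnished by hypothesis as $(M, \mu_L, \mu_R, \alpha_M)$, so that relations~(\ref{lm11})--(\ref{lm33}) hold with respect to the structure maps of $A_{\alpha_A^n}$.

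First I would set $L = A_{\alpha_A^n}$ and note that its twisting endomorphism is $\alpha_L = \alpha_A^n$. Substituting this into the formulas of Theorem~\ref{mcl}, the twisted module maps become
\begin{eqnarray}
\tilde\mu_L &=& \mu_L\big((\alpha_A^n)^2\otimes Id_M\big) = \mu_L\big(\alpha_A^{2n}\otimes Id_M\big),\nonumber\\
\tilde\mu_R &=& \mu_R\big(Id_M\otimes (\alpha_A^n)^2\big) = \mu_R\big(Id_M\otimes \alpha_A^{2n}\big),\nonumber
\end{eqnarray}
which are exactly the maps named in the statement. Applying Theorem~\ref{mcl} to the pair $(L, M) = (A_{\alpha_A^n}, M)$ then gives at once that $(M, \tilde\mu_L, \tilde\mu_R, \alpha_M)$ is again a color Hom-Leibniz module over $A_{\alpha_A^n}$, which is the desired conclusion.

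Because the argument is purely an instantiation, I do not expect a genuine obstacle: every axiom~(\ref{lm1})--(\ref{lm3}) for the pair $\tilde\mu_L, \tilde\mu_R$ was already verified in the proof of Theorem~\ref{mcl} using only the module relations and the multiplicativity of the twisting map. The sole point deserving a sentence of care is checking that the hypotheses of Theorem~\ref{mcl} are literally met, namely that $A_{\alpha_A^n}$ is a bona fide multiplicative color Hom-Leibniz algebra with twisting map $\alpha_A^n$; this is exactly what Theorem~\ref{n1} provides, since $\alpha_A$ being an endomorphism of the original structure forces $\alpha_A^n$ to be multiplicative with respect to $[-,-]_{\alpha_A^n}$. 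Once this identification is recorded, the corollary follows immediately.
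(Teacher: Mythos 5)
Your proposal is correct and matches the paper's intent exactly: the paper states this corollary without a separate proof, precisely because it is the instantiation of Theorem~\ref{mcl} with $L = A_{\alpha_A^n}$ and $\alpha_L = \alpha_A^n$, the multiplicativity of $\alpha_A^n$ with respect to $[-,-]_{\alpha_A^n}$ being supplied by Theorem~\ref{n1}. Your added care in identifying $\tilde\mu_L = \mu_L(\alpha_A^{2n}\otimes Id_M)$ and $\tilde\mu_R = \mu_R(Id_M\otimes\alpha_A^{2n})$ and in checking the hypotheses is exactly the right (and only) content of the argument.
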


% \end{Proof}

% {\it Proof :} It is straighforward.
% \begin{definition}
% A non-commutative color Hom-Leibniz-Poisson module (i.e a module over a non-commutative color Hom-Leibniz-Poisson) is a color
%  Hom-Leibniz module $(M, \mu_L, \mu_R, \varepsilon, \beta)$ endowed with two even bilinear
% maps $\lambda_L : L\otimes M\rightarrow M$ and $\lambda_R : M\otimes L\rightarrow M$ such that
% % Let $(L, \mu, \alpha)$ be a color Hom-Leibniz algebra and $(M, \beta)$ a Hom-module. An $L$-module on $M$ consists of $\bf K$-bilinear maps
% % $\mu_L : L\otimes M\rightarrow M$ and $\mu_R : M\otimes L\rightarrow M$ such that for any $x, y\in L, m\in M$, 
% \begin{eqnarray}
% \mu_L(\alpha_L(x), \lambda_L(y, m))&=&\lambda_L([x, y], \beta(m))+\varepsilon(x, y)\lambda_L(\alpha_L(y), \mu_L(x, m)),\label{lpm1}\\
% \mu_L(\alpha_L(x), \lambda_R(m, y))&=&\lambda_R(\mu_L(x, m), \alpha_L(y))+\varepsilon(x, m)\lambda_R(\beta(m), [x, y]),\label{lpm2}\\
% \mu_R(\beta(m), \mu(x, y))&=&\lambda_R(\mu_R(m, x), \alpha_L(y))+\varepsilon(m, x)\lambda_R(\alpha_L(x), \mu_R(m, b)).\label{lpm3}\qquad
% \end{eqnarray}
% % where $\tau_{L, L}(x\otimes y)=y\otimes x, \tau_{L, M}(x\otimes m)=m\otimes x, \tau_{M, L}(m\otimes x)=x\otimes m$.
% \end{definition}

% \end{theorem} 
% 
% \begin{Lemma} This is a text of a lemma. 
% 
% \end{Lemma} 
% 
% \section{Conclusion} These are the conclusions of the paper. \\

% {\bf Acknowledgements.} This is a text of acknowledgements. 

%{\bf Received: November, 2014}

\end{document}